\newtheorem{theorem}{Theorem}[section]
\newtheorem{cor}[theorem]{Corollary}
\newtheorem{lemma}[theorem]{Lemma}
\theoremstyle{definition}
\newtheorem{remark}[theorem]{Remark}
\numberwithin{equation}{section}
\begin{document}

\title[New characterizations for the essential norms of...]
{New characterizations for the essential norms of generalized weighted composition operators\\ between Zygmund type spaces}

\author[Hassanlou and Sanatpour]{Mostafa Hassanlou and Amir H. Sanatpour}
\address{Engineering Faculty of Khoy, Urmia University, Urmia, Iran.}
\email{m.hassanlou@urmia.ac.ir}
\address{Department of Mathematics, Kharazmi University, Tehran, Iran.}
\email{a$\_ $sanatpour@khu.ac.ir, a.sanatpour@gmail.com}

\subjclass[2010]{Primary 47B38; Secondary 47B33, 46E15.}

\keywords{Generalized weighted composition operator, Weighted composition operator, Essential norm, Zygmund type space, Bloch type space.}

%\thanks{The first author is supported by }

\begin{abstract}
We give different types of new characterizations for the boundedness and essential norms of generalized weighted composition operators between Zygmund type spaces. Consequently, we obtain new characterizations for the compactness of such operators.
\end{abstract}

\maketitle
%**************************************************************************************************
%**************************************************************************************************
%**************************************************************************************************
\section{\sc\bf Introduction}
\indent Let $\mathbb{D}$ denote the open unit ball of the complex plane $\mathbb{C}$ and
$H(\mathbb{D})$ denote the space of all complex-valued analytic functions on $\mathbb{D}$. By a \textit{weight} $\nu$ we mean
a strictly positive bounded function $\nu : \mathbb{D} \rightarrow \mathbb{R}^+$. The \textit{weighted-type space} $H_{\nu}^{\infty}$ consists of all functions $f \in H(\mathbb{D})$ such that
$$ \|f\|_{\nu} = \sup_{z \in \mathbb{D}} \nu(z) |f(z)| < \infty.$$
For a weight $\nu$, the \textit{associated weight} $\widetilde{\nu}$ is defined by
$$\widetilde{\nu}(z) = (\sup \{ |f(z)|: f \in H_{\nu}^{\infty}, \|f\|_{\nu} \leq 1 \})^{-1}.$$
It is known that for the \textit{standard weights} ${\nu}_{\alpha}(z) = (1-|z|^2)^{\alpha}$, $0 < \alpha < \infty$, and for the \textit{logarithmic weight}
${\nu}_{\log} (z)= \left ( \log \frac{2}{1-|z|^2} \right)^{-1}$, the associated weights and weights are the same.

For each $0<\alpha<\infty$, the \textit{Bloch type space} $\mathcal{B}_{\alpha}$ consists of all functions $f \in H(\mathbb{D})$ for which
$$\|f\|_{s\mathcal{B}_{\alpha}} =\sup_{z\in \mathbb{D}} (1-|z|^2)^{\alpha} |f'(z)| < \infty. $$
The space $\mathcal{B}_{\alpha}$ is a Banach space equipped with the norm
$$\|f\|_{\mathcal{B}_{\alpha}} = |f(0)| + \|f\|_{s\mathcal{B}_{\alpha}},$$
for each $f\in \mathcal{B}_{\alpha}$.
The \textit{little Bloch type space} $\mathcal{B}_{\alpha, 0}$ is the closed subspace of $\mathcal{B}_{\alpha}$ consists of those functions
$f\in {\mathcal{B}_\alpha}$ satisfying
$$\lim_{|z|\to 1} ( 1 - |z|^2)^\alpha |f'(z)|=0.$$
The classic \textit{Zygmund space} $\mathcal{Z}$ consists of all functions $f \in H(\mathbb{D})$ which are continuous on the closed unit ball
$ \overline{\mathbb{D}}$ and
$$\sup \frac{|f(e^{i(\theta +h)}) + f(e^{i(\theta -h)}) - 2 f (e^{i \theta})|}{h} < \infty,$$
where the supremum is taken over all $\theta \in \mathbb{R}$ and $h> 0$.
By \cite[Theorem 5.3]{duren1}, an analytic function $f$ belongs to $\mathcal{Z}$ if and only if $\sup_{z\in \mathbb{D}} (1-|z|^2) |f''(z)| < \infty$.
Motivated by this, for each  $0<\alpha<\infty$, the \textit{Zygmund type space} $\mathcal{Z}_{\alpha}$ is defined to be the space of all functions $f \in H(\mathbb{D})$ for which
$$\|f\|_{s\mathcal{Z}_{\alpha}} = \sup_{z\in \mathbb{D}} (1-|z|^2)^{\alpha} |f''(z)| < \infty.$$
The space $\mathcal{Z}_{\alpha}$ is a Banach space equipped with the norm
$$\|f\|_{\mathcal{Z}_{\alpha}} = |f(0)| + |f'(0)| + \|f\|_{s\mathcal{Z}_{\alpha}},$$
for each $f\in \mathcal{Z}_{\alpha}$. The \textit{little Zygmund type space} $\mathcal{Z}_{\alpha, 0}$ is the closed subspace of $\mathcal{Z}_{\alpha}$ consists of those functions $f\in {\mathcal{Z}_\alpha}$ satisfying
$$\lim_{|z|\to 1} ( 1 - |z|^2)^\alpha |f''(z)|=0.$$

Recall that for the Banach spaces $X$ and $Y$, the space of all bounded operators $T: X\rightarrow Y$ is denoted by ${\mathcal{B}}(X, Y)$ and the \textit{operator norm} of $T\in {\mathcal{B}}(X, Y)$ is denoted by $\|T\|_{X\rightarrow Y}$. 
The closed subspace of ${\mathcal{B}}(X, Y)$ containing all compact operators $T: X\rightarrow Y$ is denoted by ${\mathcal{K}}(X, Y)$. 
The \textit{essential norm} of $T\in {\mathcal{B}}(X, Y)$, denoted by $\|T\|_{e, X\rightarrow Y}$, is defined as the distance from $T$ to ${\mathcal{K}}(X, Y)$, that is
$$\|T\|_{e, X\rightarrow Y}=\inf\{\|T - K\|_{X\rightarrow Y} : K\in {\mathcal{K}}(X, Y)\}.$$
Clearly, an operator $T\in {\mathcal{B}}(X, Y)$ is compact if and only if $\|T\|_{e, X\rightarrow Y} = 0$. 
Therefore, essential norm estimates of bounded operators result in necessary and/or sufficient conditions for the compactness of such operators. 
Essential norm estimates of different types of operators between various classes of Banach spaces have been studied by many authors. 
See, for example, \cite{hu4, hu3, lin1, lin3, sanatpour1, x2} and references therein.

Let $u$ and $\varphi$ be analytic functions on $\mathbb{D}$ such that $\varphi (\mathbb{D}) \subseteq \mathbb{D}$.
The \textit{weighted composition operator} $uC_\varphi$ is defined by $uC_\varphi f = u\cdot f\circ \varphi$ for all $f \in H(\mathbb{D})$.
When $u = 1$ we get the well-known \textit{composition operator} $C_\varphi$ given by $C_\varphi f = f\circ \varphi$ for all $f \in H(\mathbb{D})$.
Weighted composition operators appear in the study of dynamical systems and also it is known that isometries
on many analytic function spaces are of the canonical forms of weighted composition operators.
Operator theoretic properties of (weighted) composition operators have been studied by many authors between different classes of analytic function spaces.
See, for example, \cite{lin1, lin3, mon, SH} and the references therein.

For each non-negative integer $k$, the \textit{generalized weighted composition operator} $D_{\varphi, u}^k$ is defined by
$$D_{ \varphi, u}^k f(z) = u(z) f^{(k)}(\varphi(z)),$$
for each $f \in H(\mathbb{D})$ and $z \in \mathbb{D}$.
The class of generalized weighted composition operators include weighted composition operators $uC_{\varphi}=D_{\varphi, u}^0 $, \textit{composition operators followed by differentiation} $D C_{\varphi}=D_{\varphi, \varphi'}^1$ and \textit{composition operators proceeded by differentiation} $C_{\varphi} D=D_{\varphi, 1}^1$ \cite{liu}.
Also, weighted types of operators $DC_\varphi$ and $C_\varphi D$ are of the form $D_{\varphi, u}^k$, that is 
$u DC_\varphi = D_{\varphi, u \varphi'}^1$ and $u C_\varphi D = D_{\varphi, u}^1$ \cite{long}.

Boundedness and compactness of generalized weighted composition operators have been studied between Bloch type spaces and Zygmund type spaces in \cite{hu4, li3}, and between Bloch type spaces and weighted-type spaces in \cite{li2, x3}. Essential norms of generalized weighted composition operators in these cases have been studied in \cite{hu4, hu3, sanatpour1}.
In \cite{x1}, different characterizations for the boundedness and compactness of these operators between Bloch type spaces are given and also their essential norms are investigated in \cite{x2}.
In this paper we first study boundedness of generalized weighted composition operators between Zygmund type spaces and give new characterizations for the boundedness of these operators. Then, we find estimates for the essential norms of such operators in terms of the new characterizations. Consequently, we obtain different types of characterizations for the compactness of such operators.

The following lemma, which will be used in the next chapters, collects some useful estimates for the functions in Zygmund type spaces. See, for example, \cite{sanatpour1} and references therein.
\begin{lemma} \label{l31}
For every $f \in \mathcal{Z}_{\alpha}$ we have
\begin{itemize}
\item[$(i)$] $|f'(z)| \leq \frac{2}{1-\alpha} \|f\|_{\mathcal{Z}_{\alpha}} $ and $|f(z)| \leq \frac{2}{1-\alpha} \|f\|_{\mathcal{Z}_{\alpha}}$
for $0 < \alpha < 1$,
\item[$(ii)$] $|f'(z)| \leq 2 \|f\|_{\mathcal{Z}} \log \frac{2}{1-|z|}$ and $|f(z)| \leq \|f\|_{\mathcal{Z}}$ for $\alpha =1$,
\item[$(iii)$] $|f'(z)| \leq \frac{2}{\alpha -1} \frac{\|f\|_{\mathcal{Z}_{\alpha}}}{(1-|z|)^{\alpha -1}}$, for $1< \alpha <\infty$,
\item[$(iv)$] $|f(z)| \leq \frac{2}{(\alpha -1)(2-\alpha)} \|f\|_{\mathcal{Z}_{\alpha}}$, for $1 < \alpha <2$,
\item[$(v)$] $|f(z)| \leq 2 \|f\|_{\mathcal{Z}_2} \log \frac{2}{1-|z|}$, for $\alpha =2$,
\item[$(vi)$] $|f(z)| \leq \frac{2}{(\alpha -1)(\alpha -2)} \frac{\|f\|_{\mathcal{Z}_{\alpha}}}{(1-|z|)^{\alpha -2}}$, for $2< \alpha <\infty$.
\end{itemize}
\end{lemma}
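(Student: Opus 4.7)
The plan is to start from the elementary pointwise estimate
$$|f''(z)| \;\leq\; \frac{\|f\|_{s\mathcal{Z}_{\alpha}}}{(1-|z|^2)^{\alpha}} \;\leq\; \frac{\|f\|_{\mathcal{Z}_{\alpha}}}{(1-|z|)^{\alpha}},$$
which is immediate from the definition of $\mathcal{Z}_{\alpha}$ together with $(1-|z|^2) \geq 1-|z|$, and then to bootstrap it to bounds on $f'$ and $f$ by integrating twice along the radial segment from $0$ to $z$. This is the standard strategy, and the six items of the lemma simply record what happens as the parameter $\alpha$ changes the rate at which the resulting integrals blow up.

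For the estimates on $f'$, I would write
$$f'(z) \;=\; f'(0) + \int_{0}^{1} z\,f''(tz)\,dt,$$
so that, after the substitution $s = t|z|$,
$$|f'(z)| \;\leq\; |f'(0)| + \|f\|_{\mathcal{Z}_{\alpha}} \int_{0}^{|z|} \frac{ds}{(1-s)^{\alpha}}.$$
The value of this integral immediately splits into three regimes: it is bounded by $\tfrac{1}{1-\alpha}$ when $0<\alpha<1$, equals $\log\tfrac{1}{1-|z|}$ when $\alpha=1$, and equals $\tfrac{(1-|z|)^{1-\alpha}-1}{\alpha-1}$ when $\alpha>1$. Since $|f'(0)| \leq \|f\|_{\mathcal{Z}_{\alpha}}$, combining these with a careful bookkeeping (using $\log\tfrac{1}{1-|z|} \leq \log\tfrac{2}{1-|z|}$ in case $\alpha=1$, and absorbing the additive constant into the dominant term in case $\alpha>1$) yields (i), (ii) and (iii) with the stated constants.

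For the estimates on $f$ itself, I would iterate: starting from
$$f(z) \;=\; f(0) + \int_{0}^{1} z\,f'(tz)\,dt,$$
I would substitute the bounds on $|f'|$ just established. When $0<\alpha<1$ the bound on $|f'|$ is already uniform, so (i) yields the bound on $|f|$ at once. When $\alpha=1$ the integral $\int_{0}^{|z|}\log\tfrac{2}{1-s}\,ds$ is bounded uniformly in $|z|<1$, giving (ii). For $\alpha>1$ the integral $\int_{0}^{|z|}(1-s)^{1-\alpha}\,ds$ behaves differently in three sub-regimes, producing: a bounded term when $1<\alpha<2$ (giving (iv)), a logarithmic term when $\alpha=2$ (giving (v)), and a $(1-|z|)^{2-\alpha}$-growth term when $\alpha>2$ (giving (vi)).

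There is no real obstacle here beyond organizing the case analysis: the whole lemma is two successive fundamental-theorem-of-calculus estimates against $(1-|z|)^{-\alpha}$, with the critical exponents $\alpha=1$ and $\alpha=2$ separating the qualitatively different behaviors. The only mildly fiddly part is matching the exact constants claimed in the statement, which requires using $(1-|z|^2)=(1-|z|)(1+|z|)$ to extract factors of $2$ at the appropriate moments and, in (iii)--(vi), absorbing the $|f'(0)|+|f(0)|$ contributions (each bounded by $\|f\|_{\mathcal{Z}_{\alpha}}$) into the dominant $(1-|z|)^{-\beta}$ or logarithmic term.
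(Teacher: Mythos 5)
Your overall strategy is correct and is the standard one: the paper does not actually prove this lemma (it only cites \cite{sanatpour1}), and the argument there is exactly what you describe, namely integrating the pointwise bound $|f''(w)|\le \|f\|_{s\mathcal{Z}_{\alpha}}(1-|w|^2)^{-\alpha}\le \|f\|_{\mathcal{Z}_{\alpha}}(1-|w|)^{-\alpha}$ twice along the radius, with the case split at the critical exponents $\alpha=1$ and $\alpha=2$. Your evaluations of $\int_0^{|z|}(1-s)^{-\alpha}\,ds$ in the three regimes are right, and up to $\alpha$-dependent multiplicative constants (which is all the paper ever uses) every item follows from your scheme.

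The caveat concerns precisely the part you dismiss as bookkeeping. First, the exact constants in (iii) and (vi) cannot be "matched" because they are not correct as literally stated once $\alpha>3$: take $f(z)=z$ (resp.\ $f\equiv 1$) and $z=0$; then the left-hand side equals $\|f\|_{\mathcal{Z}_{\alpha}}=1$ while the right-hand side is $\tfrac{2}{\alpha-1}<1$ (resp.\ $\tfrac{2}{(\alpha-1)(\alpha-2)}<1$). The absorption of $|f'(0)|$ or $|f(0)|$ costs a summand $1$, so the natural constant is of the form $1+\tfrac{1}{\alpha-1}$, and these items should be read with $\lesssim$. Second, for the bounds on $|f|$ in (i), (ii) and (iv) your plan to "substitute the bounds on $|f'|$ just established" is too lossy: e.g.\ in (ii) the claimed constant is exactly $1$, while $|f(0)|+2\|f\|_{\mathcal{Z}}\int_0^{|z|}\log\tfrac{2}{1-s}\,ds$ only gives about $4.4\,\|f\|_{\mathcal{Z}}$. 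You must instead integrate the intermediate bound $|f'(te^{i\theta})|\le |f'(0)|+\|f\|_{s\mathcal{Z}}\log\tfrac{1}{1-t}$, use $\int_0^r\log\tfrac{1}{1-t}\,dt=r+(1-r)\log(1-r)\le r$, and exploit that $|f(0)|+|f'(0)|+\|f\|_{s\mathcal{Z}}$ is exactly $\|f\|_{\mathcal{Z}}$; the same "pre-absorption" bookkeeping is what makes the stated constants in (i) and (iv) come out. With these adjustments (and with (iii), (vi) stated up to constants) your proof is complete.
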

It is known that for each $n \geq 2$ and $0 < \alpha < \infty$ we have
$$ |f^{(n)} (z)| \leq \frac{\|f\|_{\mathcal{B}_{\alpha}}}{(1-|z|^2)^{\alpha + n -1}},$$
for all $f \in \mathcal{B}_{\alpha}$ and $z\in \mathbb{D}$, see \cite{zhu1}.
Therefore, for each $n \geq 2$ and $0 < \alpha < \infty$ we have
\begin{equation} \label{eq11}
|f^{(n +1)} (z)| \leq \frac{\|f\|_{\mathcal{Z}_{\alpha}}}{(1-|z|^2)^{\alpha + n -1}},
\end{equation}
for all $f \in \mathcal{Z}_{\alpha}$ and $z\in \mathbb{D}$.
Note that, by the definition of Zygmund type spaces, it is clear that \eqref{eq11} also holds in the case of $n=1$.

In this paper, for real scalars $A$ and $B$, the notation $A \lesssim B$ means $A \leq c B$ for some
positive constant $c$. Also, the notation $A \approx B$ means $A \lesssim B$ and $B \lesssim A$.
%**************************************************************************************************
%**************************************************************************************************
%**************************************************************************************************
\section{\sc\bf Boundedness}
For each $a\in \mathbb{D}$, the following test functions in $H(\mathbb{D})$ will be used in our proofs
$$ f_a (z) = \frac{(1-|a|^2)^2}{(1-\overline{a}z)^{\alpha}}, \quad g_a (z) = \frac{(1-|a|^2)^3}{(1-\overline{a}z)^{\alpha +1}}, \quad h_a (z) = \frac{(1-|a|^2)^4}{(1-\overline{a}z)^{\alpha +2}}.$$
Moreover, in order to simplify the notations, we define
\begin{align*}
A(u,\varphi, \alpha, \beta, n) =  & \sup_{z \in \mathbb{D}} \frac{(1-|z|^2)^{\beta}}{(1-|\varphi(z)|^2)^{\alpha + n-2}} |u''(z)|, \\
B(u,\varphi, \alpha, \beta, n) = & \sup_{z \in \mathbb{D}} \frac{ (1-|z|^2)^{\beta} }{(1-|\varphi(z)|^2)^{\alpha + n -1}} |2u'(z) \varphi'(z) + u(z) \varphi''(z) |, \\
C(u , \varphi, \alpha, \beta, n) = &  \sup_{z \in \mathbb{D}} \frac{ (1-|z|^2)^{\beta} }{(1-|\varphi(z)|^2)^{\alpha + n}} |u(z) \varphi'^2(z)|.
\end{align*}
In the next theorem, we give three different characterizations for the boundedness of $ D_{\varphi,u}^n : \mathcal{Z}_{\alpha} \rightarrow \mathcal{Z}_{\beta} $.
\begin{theorem} \label{g34}
Let $u\in H(\mathbb{D})$, $\varphi$ be an analytic selfmap of $\mathbb{D}$ and $(n, \alpha) \neq (1, 1)$. Then, the following  statements are equivalent for each
$0< \beta < \infty$.
\begin{itemize}
\item[$(i)$] $ D_{\varphi,u}^n : \mathcal{Z}_{\alpha} \rightarrow \mathcal{Z}_{\beta} $ is bounded.
\item[$(ii)$] $\sup_{j \geq 1} j^{\alpha -2} \| D_{\varphi,u}^n I^{j+1}\|_{\mathcal{Z}_{\beta}} < \infty$, where $I^{j}(z) = z^j$ for each $j\geq 1$ and $z\in \mathbb{D}$.
\item[$(iii)$] $u \in \mathcal{Z}_{\beta}$ and
$$\sup_{z \in \mathbb{D}} (1-|z|^2)^{\beta} |u(z) \varphi'^2 (z)| < \infty,$$
$$\sup_{z \in \mathbb{D}} (1-|z|^2)^{\beta} |2u'(z) \varphi'(z) + u(z) \varphi''(z) | < \infty,$$
$$ \sup_{a \in \mathbb{D}}\| D_{\varphi,u}^n f_a\|_{\mathcal{Z}_{\beta}} < \infty, \quad
\sup_{a \in \mathbb{D}} \| D_{\varphi,u}^n g_a\|_{\mathcal{Z}_{\beta}} < \infty,
\quad \sup_{a \in \mathbb{D}} \| D_{\varphi,u}^n h_a\|_{\mathcal{Z}_{\beta}} < \infty.$$
\item[$(iv)$]
$$ \max \{A(u,\varphi, \alpha, \beta,n), B(u,\varphi, \alpha, \beta,n), C(u,\varphi, \alpha, \beta,n) \} < \infty.  $$
Moreover, this is also equivalent to
$$ \max \{\|u\|_{\mathcal{Z}_{\beta}},B(u,\varphi, \alpha, \beta,1), C(u,\varphi, \alpha, \beta,1) \} < \infty,$$
in the special case of $n=1$ and $0 < \alpha < 1$.
\end{itemize}
\end{theorem}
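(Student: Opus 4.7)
The plan is to close the cycle $(iv)\Rightarrow(i)\Rightarrow(iii)\Rightarrow(iv)$ and to establish $(i)\Leftrightarrow(ii)$ by routing $(i)\Rightarrow(ii)$ through a monomial-norm estimate and $(ii)\Rightarrow(iii)$ through the monomial expansion of $f_a,g_a,h_a$. The common starting point is the identity
$$(D_{\varphi,u}^n f)''(z)=u''(z)f^{(n)}(\varphi(z))+\bigl(2u'(z)\varphi'(z)+u(z)\varphi''(z)\bigr)f^{(n+1)}(\varphi(z))+u(z)\varphi'^2(z)f^{(n+2)}(\varphi(z)),$$
obtained by two applications of the product and chain rules.

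For $(iv)\Rightarrow(i)$ I would multiply the identity by $(1-|z|^2)^\beta$ and bound $|f^{(n)}(\varphi(z))|$ by the appropriate case of Lemma~\ref{l31} and $|f^{(n+1)}(\varphi(z))|,|f^{(n+2)}(\varphi(z))|$ by \eqref{eq11}; the three resulting factors combine with $A$, $B$, $C$ to give $\|D_{\varphi,u}^n f\|_{\mathcal{Z}_\beta}\lesssim\max\{A,B,C\}\|f\|_{\mathcal{Z}_\alpha}$. For the special case $n=1$, $0<\alpha<1$, the exponent $\alpha+n-2=\alpha-1$ is negative, so $(1-|\varphi|^2)^{\alpha-1}\ge 1$ forces $A(u,\varphi,\alpha,\beta,1)\le\|u\|_{s\mathcal{Z}_\beta}$; combined with $u\in\mathcal{Z}_\beta$, which is automatic once $(i)$ holds, this shows the $A$-bound may be replaced by $\|u\|_{\mathcal{Z}_\beta}<\infty$. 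For $(i)\Rightarrow(iii)$ I would test the operator against the polynomials $p_k(z)=z^{n+k}/(n+k)!$ for $k=0,1,2$: these satisfy $D_{\varphi,u}^n p_0=u$, $D_{\varphi,u}^n p_1=u\varphi$, $D_{\varphi,u}^n p_2=u\varphi^2/2$, so expanding $(u\varphi^k)''$ and solving the resulting triangular system in $u''$, $2u'\varphi'+u\varphi''$, $u\varphi'^2$ yields the three plain-norm conditions. The remaining sup-conditions follow from $\sup_{a\in\mathbb{D}}\bigl(\|f_a\|_{\mathcal{Z}_\alpha}+\|g_a\|_{\mathcal{Z}_\alpha}+\|h_a\|_{\mathcal{Z}_\alpha}\bigr)<\infty$, a routine differentiation of the kernels.

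The crux is $(iii)\Rightarrow(iv)$. For each $z\in\mathbb{D}$ with $|\varphi(z)|>1/2$, set $a=\varphi(z)$ and feed $f_a,g_a,h_a$ into the identity above. A direct computation shows
$$f_a^{(k)}(a),\; g_a^{(k)}(a),\; h_a^{(k)}(a)=(\text{rational prefactor in $\alpha$})\cdot\overline{a}^k/(1-|a|^2)^{\alpha+k-2}$$
for $k=n,n+1,n+2$, so $\|D_{\varphi,u}^n f_a\|_{\mathcal{Z}_\beta},\|D_{\varphi,u}^n g_a\|_{\mathcal{Z}_\beta},\|D_{\varphi,u}^n h_a\|_{\mathcal{Z}_\beta}$ control a $3\times 3$ linear system in the unknowns $(1-|z|^2)^\beta|u''(z)|\,\overline{\varphi(z)}^n/(1-|\varphi|^2)^{\alpha+n-2}$ and its two shifted analogues. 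The coefficient matrix factors as a product of diagonals and the Vandermonde matrix in the nodes $\alpha+n,\alpha+n+1,\alpha+n+2$, hence is invertible; inversion produces bounds on $A,B,C$ on the set $\{|\varphi|>1/2\}$. On $\{|\varphi|\le 1/2\}$ the denominators $(1-|\varphi|^2)^{\alpha+n-j}$ are bounded, so $A,B,C$ reduce there to the plain sup-conditions already in hand. This Vandermonde inversion, together with the matching on $\{|\varphi|\le 1/2\}$, is the main technical obstacle.

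Finally, $(i)\Rightarrow(ii)$ follows from the classical estimate $\|I^{j+1}\|_{\mathcal{Z}_\alpha}\approx j^{2-\alpha}$, verified by locating the maximum of $(1-|z|^2)^\alpha(j+1)j|z|^{j-1}$ on $[0,1)$. For $(ii)\Rightarrow(iii)$ I would expand
$$f_a(z)=(1-|a|^2)^2\sum_{k\ge 0}\frac{\Gamma(\alpha+k)}{\Gamma(\alpha)\,k!}\bar{a}^k z^k,$$
apply $D_{\varphi,u}^n$ termwise, and use $\|D_{\varphi,u}^n I^{k}\|_{\mathcal{Z}_\beta}\lesssim k^{2-\alpha}$ from $(ii)$ together with $\Gamma(\alpha+k)/(\Gamma(\alpha)k!)\approx k^{\alpha-1}$ to obtain $\|D_{\varphi,u}^n f_a\|_{\mathcal{Z}_\beta}\lesssim(1-|a|^2)^2\sum_{k\ge 1}k|a|^k\lesssim 1$ uniformly in $a$; analogous sums handle $g_a$ and $h_a$. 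The plain sup-conditions in $(iii)$ come from applying $(ii)$ to the fixed polynomials $p_0,p_1,p_2$. The excluded pair $(n,\alpha)=(1,1)$ is precisely the case where the monomial exponent $2-\alpha=1$ and the polynomial test $p_1$ interact with the logarithmic estimate in Lemma~\ref{l31}$(ii)$, breaking the uniform control.
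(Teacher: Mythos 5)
Your proposal is correct and follows essentially the same route as the paper: the same test functions $f_a, g_a, h_a$ with $a=\varphi(z)$ and the split at $|\varphi(z)|=1/2$ for $(iii)\Rightarrow(iv)$, the same power-series/Stirling argument to control $\|D_{\varphi,u}^n f_a\|_{\mathcal{Z}_\beta}$ from the monomial condition, and the same pointwise estimates via \eqref{eq11} and Lemma \ref{l31} for $(iv)\Rightarrow(i)$, including the same treatment of the special case $n=1$, $0<\alpha<1$. The only genuine difference is presentational: where the paper pre-computes the explicit combinations $k_a, l_a, m_a$ of $f_a, g_a, h_a$ that annihilate two of the three terms at a time, you invert the resulting $3\times 3$ system abstractly via its Vandermonde structure (and you route $(i)\Rightarrow(iii)$ directly through the monomials $z^{n+k}$ instead of through $(ii)$), which is the same isolation device in different packaging and is handled at the same level of rigor as the paper's argument.
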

\begin{proof}
$(i) \Rightarrow (ii)$ Suppose that $n \geq 1$ and $ D_{\varphi,u}^n : \mathcal{Z}_{\alpha} \rightarrow \mathcal{Z}_{\beta} $ is a bounded operator. Since $\{j^{\alpha -1} I^j\}$ is a bounded sequence in  $\mathcal{B}_{\alpha}$, see for example \cite{zhao1}, one can see that $\{j^{\alpha -2} I^{j+1}\}$ is a bounded sequence in
$\mathcal{Z}_{\alpha}$. Therefore, boundedness of $D_{\varphi,u}^n : \mathcal{Z}_{\alpha} \rightarrow \mathcal{Z}_{\beta}$ implies that
$\sup_{j \geq 1} j^{\alpha -2} \| D_{\varphi,u}^n I^{j+1}\|_{\mathcal{Z}_{\beta}} < \infty$.  \\
$(ii) \Rightarrow (iii)$ Let $M=\sup_{j \geq 1} j^{\alpha -2} \| D_{\varphi,u}^n I^{j+1}\|_{\mathcal{Z}_{\beta}}<\infty$. 
Recalling the definition of $f_a$, $g_a$ and $h_a$, one can see that
$$f_a (z) = (1-|a|^2)^2 \sum_{j=0}^{\infty} \frac{\Gamma (j+ \alpha)}{j! \Gamma (\alpha)} \overline{a}^j z^j, \quad
 g_a (z) = (1-|a|^2)^3 \sum_{j=0}^{\infty} \frac{\Gamma (j+ \alpha+1)}{j! \Gamma (\alpha +1)} \overline{a}^j z^j,$$
$$h_a (z) = (1-|a|^2)^4 \sum_{j=0}^{\infty} \frac{\Gamma (j+ \alpha+2)}{j! \Gamma (\alpha +2)} \overline{a}^j z^j,$$
for each $a, z\in \mathbb{D}$. Note that
\begin{align*}
f_a (z) = & (1-|a|^2)^2 + (1-|a|^2)^2 \sum_{j=1}^{\infty} \frac{\Gamma (j+ \alpha)}{j! \Gamma (\alpha)} \overline{a}^j z^j \\
= & (1-|a|^2)^2 + (1-|a|^2)^2 \sum_{j=0}^{\infty} \frac{\Gamma (j+1+ \alpha)}{(j+1)! \Gamma (\alpha)} \overline{a}^{j+1} z^{j+1} \\
= & (1-|a|^2)^2 + (1-|a|^2)^2 \sum_{j=0}^{\infty} \frac{j+\alpha}{j+1} \frac{\Gamma (j+ \alpha)}{j! \Gamma (\alpha)} \overline{a}^{j+1} z^{j+1}.
\end{align*}
Also, by Stirling's formula, we know that $\frac{\Gamma (j+ \alpha)}{j! \Gamma (\alpha)}\approx j^{\alpha -1}$ as $j \rightarrow \infty$, see \cite{hu4}.
Consequently,
\begin{align*}
\| D_{\varphi,u}^n f_a\|_{\mathcal{Z}_{\beta}} \lesssim & (1-|a|^2)^2 \| D_{\varphi,u}^n 1\|_{\mathcal{Z}_{\beta}} + (1-|a|^2)^2 \sum_{j=0}^{\infty} \frac{j+\alpha}{j+1} j^{\alpha -1} |a| ^{j+1} \| D_{\varphi,u}^n I^{j+1}\|_{\mathcal{Z}_{\beta}} \\
= & (1-|a|^2)^2 \sum_{j=0}^{\infty} \frac{j+\alpha}{j+1} j |a| ^{j+1} j^{\alpha -2} \| D_{\varphi,u}^n I^{j+1}\|_{\mathcal{Z}_{\beta}} \\
\leq & (1-|a|^2)^2 \sum_{j=0}^{\infty} (j+\alpha) |a| ^{j+1} j^{\alpha -2} \| D_{\varphi,u}^n I^{j+1}\|_{\mathcal{Z}_{\beta}} \lesssim M,
\end{align*}
and since $a\in\mathbb{D}$ was arbitrary, we get $\sup_{a \in \mathbb{D}}\| D_{\varphi,u}^n f_a\|_{\mathcal{Z}_{\beta}} < \infty$.
Applying a similar argument to the functions $g_a$ and $h_a$ implies $(iii)$.\\
$(iii) \Rightarrow (iv)$ For each $a, z \in \mathbb{D}$ define
$$k_a (z) = (\alpha +n)\frac{(1-|a|^2)^2}{(1-\overline{a}z)^{\alpha}} -2 \alpha
\frac{(1-|a|^2)^3}{(1-\overline{a}z)^{\alpha+1}} + \frac{\alpha(\alpha +1)}{\alpha + n +1}
\frac{(1-|a|^2)^4}{(1-\overline{a}z)^{\alpha +2}}.$$
Then, one can see that $k_a \in \mathcal{Z}_{\alpha}$, $\sup_{a \in \mathbb{D}}
\| k_a \|_{\mathcal{Z}_{\alpha}} < \infty $,
${k^{(n)}_a} (a) = {k^{(n+1)}_a} (a) =0$ and
$$ {k^{(n+2)}_{\varphi(a)}}(\varphi(a)) = 2 \alpha (\alpha +1) \cdots
(\alpha +n) \frac{\overline{\varphi(a)}^{n+2}}{(1-|\varphi(a)|^2)^{\alpha +n}}.   $$
Moreover,
\begin{align*}
 \sup_{a \in \mathbb{D}} \| D_{\varphi,u}^n k_a\|_{\mathcal{Z}_{\beta}} \leq & (\alpha +n) \sup_{a \in \mathbb{D}} \| D_{\varphi,u}^n f_a\|_{\mathcal{Z}_{\beta}} + 2 \alpha \sup_{a \in \mathbb{D}} \| D_{\varphi,u}^n g_a\|_{\mathcal{Z}_{\beta}} \\
  & + \frac{\alpha (\alpha +1)}{\alpha +n+1}
\sup_{a \in \mathbb{D}} \| D_{\varphi,u}^n h_a\|_{\mathcal{Z}_{\beta}} < \infty,
\end{align*}
and by the definition of $\|\cdot \|_{\mathcal{Z}_{\beta}}$ we have
\begin{align*}
\infty >  \| D_{\varphi,u}^n k_{\varphi(a)}\|_{\mathcal{Z}_{\beta}}
\geq & \sup_{z \in \mathbb{D}}(1-|z|^2)^{\beta} | (u {k^{(n)}_{\varphi(a)}} \circ \varphi)''(z) | \\
\geq & (1-|a|^2)^{\beta}| (u {k^{(n)}_{\varphi(a)}} \circ \varphi)''(a) | \\
= & 2 \alpha (\alpha +1) \cdots (\alpha +n) (1-|a|^2)^{\beta} \frac{|u(a) \varphi'^2(a)| \ |\varphi(a)|^{n+2}}
{(1-|\varphi(a)|^2)^{\alpha +n}}.
\end{align*}
Therefore,
\begin{equation} \label{eq12}
\sup_{|\varphi(a)| > 1/2} \frac{ (1-|a|^2)^{\beta}}{(1-|\varphi(a)|^2)^{\alpha +n}} |u(a) \varphi'^2(a)| < \infty.
\end{equation}
On the other hand,
\begin{equation} \label{eq13}
\sup_{|\varphi(a)| \leq  1/2} \frac{(1-|a|^2)^{\beta}}{(1-|\varphi(a)|^2)^{\alpha +n}} |u(a) \varphi'^2(a)| \lesssim \sup_{|\varphi(a)| \leq  1/2} (1-|a|^2)^{\beta} |u(a) \varphi'^2(a)| < \infty.
\end{equation}
Now, \eqref{eq12} and \eqref{eq13} imply that
$$C(u,\varphi, \alpha, \beta, n) = \sup_{z \in \mathbb{D}} \frac{ (1-|z|^2)^{\beta} }{(1-|\varphi(z)|^2)^{\alpha +n}} |u(z) \varphi'^2(z)| < \infty.$$
In order to prove that $B(u,\varphi, \alpha, \beta, n) < \infty$,  the similar approach can be applied using  the test functions $l_a$ defined as
$$ l_a (z) = (\alpha +n)(\alpha +n+2)\frac{(1-|a|^2)^2}{(1-\overline{a}z)^{\alpha}} - \alpha (2 \alpha +2n +3)
\frac{(1-|a|^2)^3}{(1-\overline{a}z)^{\alpha+1}} + \alpha (\alpha +1)
\frac{(1-|a|^2)^4}{(1-\overline{a}z)^{\alpha +2}}.  $$
Also, for the proof of $A(u,\varphi, \alpha, \beta, n)<\infty$ the argument is similar, using the test functions
$$ m_a (z) = (\alpha +n+1)\frac{(1-|a|^2)^2}{(1-\overline{a}z)^{\alpha}} - 2\alpha
\frac{(1-|a|^2)^3}{(1-\overline{a}z)^{\alpha+1}} + \frac{\alpha (\alpha +1)}{\alpha+n +2}
\frac{(1-|a|^2)^4}{(1-\overline{a}z)^{\alpha +2}}.$$
The special case of $n=1$ and $0<\alpha <1$ can be proved by a similar argument and applying Lemma \ref{l31}.\\
$(iv) \Rightarrow (i)$
By applying \eqref{eq11}, for every $f \in \mathcal{Z}_{\alpha} $ and $n \geq 2$ we get
\begin{align*}
| & (D_{\varphi,u}^n f)''(z)| \\
& =  |u''(z) f^{(n)}(\varphi(z)) + (2 u'(z) \varphi'(z) + u(z) \varphi''(z))f^{(n+1)}(z)+u(z) \varphi'^2 (z) f^{(n+2)}(\varphi(z))| \\
& \leq |u''(z) f^{(n)}(\varphi(z))|  +  |(2 u'(z) \varphi'(z) + u(z) \varphi''(z))f^{(n+1)}(z)| + |u(z) \varphi'^2 (z) f^{(n+2)}(\varphi(z))| \\
& \leq  \frac{ |u''(z)|}{(1-|\varphi(z)|^2)^{\alpha +n-2}} \|f\|_{\mathcal{Z}_{\alpha}} +  \frac{|2 u'(z) \varphi'(z) + u(z) \varphi''(z)|}{(1-|\varphi(z)|^2)^{\alpha +n-1}} \|f\|_{\mathcal{Z}_{\alpha}} +  \frac{|u(z) \varphi'^2(z)|}{(1-|\varphi(z)|^2)^{\alpha + n}} \|f\|_{\mathcal{Z}_{\alpha}}.
\end{align*}
Multiplying both sides by $(1-|z|^2)^{\beta}$ and taking supremum over $z \in \mathbb{D}$, one can get $(i)$.
\end{proof}
%**************************************************************************************************
In the case of $n =1$ and $\alpha =1$ we have the following result.
\begin{theorem} \label{g35}
For each $0<\beta <\infty$, $D_{\varphi,u}^1: \mathcal{Z} \rightarrow \mathcal{Z}_{\beta}$ is bounded if and only if
\begin{itemize}
\item[$(i)$] $ \sup_{z \in \mathbb{D}} (1-|z|^2)^{\beta} |u''(z)| \log \frac{1}{1-|\varphi(z)|^2} < \infty$,
\item[$(ii)$] $\sup_{z \in \mathbb{D}} \frac{ (1-|z|^2)^{\beta} }{1-|\varphi(z)|^2} |2 u'(z) \varphi'(z) + u(z) \varphi''(z)| < \infty$,
\item[$(iii)$] $\sup_{z \in \mathbb{D}} \frac{ (1-|z|^2)^{\beta} }{(1-|\varphi(z)|^2)^2} |u(z) \varphi'^2(z)| < \infty$.
\end{itemize}
\end{theorem}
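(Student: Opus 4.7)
The plan is to mirror the structure of Theorem \ref{g34}. The distinguishing feature of the case $\alpha=n=1$ is that Lemma \ref{l31}(ii) gives $|f'(\varphi(z))|$ a logarithmic bound rather than a polynomial one in $(1-|\varphi(z)|^2)^{-1}$; this forces the logarithmic factor in condition $(i)$, and correspondingly a new test function is needed on the necessity side.

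For the sufficient direction, I would expand
\[ (D_{\varphi,u}^1 f)''(z) = u''(z) f'(\varphi(z)) + \bigl(2u'(z)\varphi'(z)+u(z)\varphi''(z)\bigr) f''(\varphi(z)) + u(z)\varphi'^2(z)\, f'''(\varphi(z)) \]
as in the proof of Theorem \ref{g34}. For $f\in\mathcal{Z}$, Lemma \ref{l31}(ii) gives $|f'(\varphi(z))|\lesssim \|f\|_{\mathcal{Z}}\log\frac{2}{1-|\varphi(z)|}$, the defining seminorm gives $|f''(\varphi(z))|\leq \|f\|_{\mathcal{Z}}/(1-|\varphi(z)|^2)$, and \eqref{eq11} with $n=2$ gives $|f'''(\varphi(z))|\leq \|f\|_{\mathcal{Z}}/(1-|\varphi(z)|^2)^2$. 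Multiplying through by $(1-|z|^2)^\beta$, taking suprema, and using that $\log\frac{2}{1-|\varphi|}$ and $\log\frac{1}{1-|\varphi|^2}$ differ only by a bounded additive constant, the conditions $(i),(ii),(iii)$ respectively control the three summands; the boundary terms $|(D_{\varphi,u}^1 f)(0)|$ and $|(D_{\varphi,u}^1 f)'(0)|$ are dominated by fixed constants times $\|f\|_{\mathcal{Z}}$ via Lemma \ref{l31}.

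For the converse, I would imitate the test-function argument $(iii)\Rightarrow(iv)$ of Theorem \ref{g34}. Specializing to $\alpha=n=1$, the functions $k_a = 2f_a - 2g_a + \tfrac{2}{3}h_a$ and $l_a = 8f_a - 7g_a + 2h_a$ lie in $\mathcal{Z}$ with uniformly bounded norms and satisfy $k_a'(a)=k_a''(a)=0 \neq k_a'''(a) \approx 1/(1-|a|^2)^2$ and $l_a'(a)=l_a'''(a)=0\neq l_a''(a)\approx 1/(1-|a|^2)$; substituting $a=\varphi(b)$ into the standard inequality $\|D_{\varphi,u}^1 F\|_{\mathcal{Z}_\beta}\geq (1-|b|^2)^\beta|(D_{\varphi,u}^1 F)''(b)|$ and exploiting the cancellations isolates $(iii)$ and $(ii)$ on $\{|\varphi(b)|>1/2\}$. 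For $(i)$ a pure combination of $f_a, g_a, h_a$ only produces a bounded $f'(a)$; I would instead introduce
\[ \phi_a(z) = (1-\bar a z)\log\tfrac{2}{1-\bar a z} + \bar a z, \]
which has uniformly bounded Zygmund seminorm since $(1-|z|^2)|\phi_a''(z)|=(1-|z|^2)|a|^2/|1-\bar a z|\lesssim 1$, and whose derivatives at $a$ are of orders $\log\frac{1}{1-|a|^2}$, $1/(1-|a|^2)$, $1/(1-|a|^2)^2$, matching $f_a, g_a$ at the second and third. Choosing $c_1, c_2$ so that $G_a=\phi_a+c_1 f_a+c_2 g_a$ satisfies $G_a''(a)=G_a'''(a)=0$ produces the required family, with $|G_a'(a)|\gtrsim \log\frac{1}{1-|a|^2}$ for $|a|$ near $1$. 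On the complement $\{|\varphi(b)|\leq 1/2\}$ the three quantities are bounded in terms of $\|u\|_{\mathcal{Z}_\beta}$, $\|u\varphi'\|_{\mathcal{Z}_\beta}$, $\|u\varphi'^2\|_{\mathcal{Z}_\beta}$, all finite by applying the bounded operator to the polynomials $z, z^2, z^3\in\mathcal{Z}$.

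The main obstacle is the construction and verification of the family $G_a$: one is forced to leave the class of rational test functions in order to capture the logarithmic growth, and must simultaneously ensure both uniform control of the Zygmund seminorm and the cancellation of two higher derivatives at the diagonal — a combination that was not needed in Theorem \ref{g34}.
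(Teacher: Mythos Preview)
Your proposal is correct and largely parallel to the paper's argument: the sufficiency direction and the isolation of conditions $(ii)$ and $(iii)$ via the families $k_a$ and $l_a$ are essentially identical (your $k_a=2f_a-2g_a+\tfrac{2}{3}h_a$ is just a rescaling of the paper's choice $3f_a-3g_a+h_a$, and your $l_a$ coincides with the paper's). The genuine difference lies in how condition $(i)$ is extracted. You build $G_a=\phi_a+c_1f_a+c_2g_a$ with $G_a''(a)=G_a'''(a)=0$, so that evaluating $\|D_{\varphi,u}^1 G_{\varphi(b)}\|_{\mathcal{Z}_\beta}$ at $z=b$ isolates the $u''$--term directly; this keeps all three conditions on an equal footing. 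The paper instead uses a single function $t_a(z)=\bigl(h(\overline{\varphi(a)}z)/\overline{\varphi(a)}\bigr)\bigl(\log\tfrac{1}{1-|\varphi(a)|^2}\bigr)^{-1}$ with $h(z)=(z-1)\bigl((1+\log\tfrac{1}{1-z})^2+1\bigr)$, whose second and third derivatives at $\varphi(a)$ are \emph{not} zero, and then subtracts the contributions of the $f''$ and $f'''$ terms using the already--proved bounds $(ii)$ and $(iii)$. Your route avoids this logical dependency at the cost of solving a small linear system for $c_1,c_2$; the paper's route avoids constructing a new cancelling family at the cost of proving $(i)$ last.

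One small slip: applying the operator to $z,z^2,z^3$ yields $u,\,u\varphi,\,u\varphi^2\in\mathcal{Z}_\beta$, not $u\varphi'$ and $u\varphi'^{2}$ as you wrote. The pointwise bounds you actually need on $\{|\varphi|\le\tfrac12\}$, namely finiteness of $\sup(1-|z|^2)^\beta|2u'\varphi'+u\varphi''|$ and $\sup(1-|z|^2)^\beta|u\varphi'^{2}|$, follow by expanding $(u\varphi)''$ and $(u\varphi^2)''$ and using $u\in\mathcal{Z}_\beta$.
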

\begin{proof}
The sufficiency part is a consequence of Lemma \ref{l31}$(ii)$ and  definition of the norm in Zygmund type spaces. Now, suppose that $ D_{\varphi,u}^1 : \mathcal{Z} \rightarrow \mathcal{Z}_{\beta}$ is bounded.
Then, by applying $D_{\varphi,u}^1 z, D_{\varphi,u}^1 z^2, D_{\varphi,u}^1 z^3 \in \mathcal{Z}_{\beta}$ we get
$u, (u' \varphi' + u \varphi''), u \varphi'^2 \in \mathcal{Z}_{\beta}$.
%, $u=D_{\varphi,u}^1 z \in \mathcal{Z}_{\beta}$.
Also, by defining
$$ k_a (z) =  3 \frac{(1-|a|^2)^2}{1-\overline{a}z} -3 \frac{(1-|a|^2)^3}{(1-\overline{a}z)^{2}} + \frac{(1-|a|^2)^4}{(1-\overline{a}z)^{3}},$$
for each $a, z \in \mathbb{D}$, one can see that  $k_{a} \in \mathcal{Z}$, $\sup_{a\in \mathbb{D}}\| k_{a}\|_\mathcal{Z}<\infty$, $k_{\varphi(a)}'(\varphi(a)) =0 $,  $k_{\varphi(a)}''(\varphi(a)) =0$ and
$ k_{\varphi(a)}'''(\varphi(a)) =  16 \frac{ \overline{\varphi(a)}^3}{(1-|\varphi(a)|^2)^{2}}$. Therefore,
\begin{align*}
\|D_{\varphi,u}^1 k_{\varphi(a)}\|_{\mathcal{Z}_{\beta}} \geq &  (1-|a|^2)^{\beta}
|u(a) \varphi'^2 (a) f'''(\varphi(a))| \\
= & 16 \frac{(1-|a|^2)^{\beta}}{(1-|\varphi(a)|^2)^{3}} |u(a) \varphi'^2(a)| |\varphi(a)|^3,
\end{align*}
and consequently,
\begin{equation*}
\sup_{|\varphi(a)| > 1/2} \frac{(1-|a|^2)^{\beta}}{(1-|\varphi(a)|^2)^{2}} |u(a) \varphi'^2(a)| \lesssim \sup_{|\varphi(a)| > 1/2} \|D_{\varphi,u}^1 k_{\varphi(a)}\|_{\mathcal{Z}_{\beta}} < \infty.
\end{equation*}
On the other hand, $u \varphi'^2 \in \mathcal{Z}_{\beta}$ implies that
\begin{equation*}
\sup_{|\varphi(a)| \leq  1/2} \frac{(1-|a|^2)^{\beta}}{(1-|\varphi(a)|^2)^{2}} |u(a) \varphi'^2(a)|  < \infty,
\end{equation*}
which completes the proof of $(iii)$. In order to prove $(ii)$, for each $a, z \in \mathbb{D}$, define the test functions
\begin{align*}
l_a (z) =  8 \frac{(1-|a|^2)^2}{1-\overline{a}z} - 7 \frac{(1-|a|^2)^3}{(1-\overline{a}z)^{2}} + 2  \frac{(1-|a|^2)^4}{(1-\overline{a}z)^{3}}.
\end{align*}
Then, one can prove $(ii)$ by a similar approach as in $(iii)$ and using the facts  $l_{a} \in \mathcal{Z}$, $\sup_{a\in \mathbb{D}}\| l_{a}\|_\mathcal{Z}<\infty$,
$l_{\varphi(a)}'(\varphi(a)) =0 $,  $l_{\varphi(a)}'''(\varphi(a)) =0$ and
$ l_{\varphi(a)}''(\varphi(a)) =  - 2\frac{ \overline{\varphi(a)}^2}{(1-|\varphi(a)|^2)^{\alpha}}$.

In order to prove $(i)$, consider the test functions
$$t_a(z) = \frac{h(\overline{\varphi(a)}z)}{\overline{\varphi(a)}} \left ( \log \frac{1}{1-|\varphi(a)|^2} \right)^{-1},$$
for each $a, z \in \mathbb{D}$, where $h(z) = (z-1) \left( ( 1+ \log \frac{1}{1-z})^2 +1 \right)$. Then, one can see that $t_{a} \in \mathcal{Z}$,  $\sup_{a \in \mathbb{D}} \|t_a\|_{\mathcal{Z}} < \infty$, $t_a'(\varphi(a)) = \log \frac{1}{1-|\varphi(a)|^2}$ and
$$t_a''(\varphi(a)) = \frac{2 \overline{\varphi(a)}}{1-|\varphi(a)|^2}, \quad
t_a'''(\varphi(a)) = \frac{2 \overline{\varphi(a)}^2}{(1-|\varphi(a)|^2)^2} (1+ (\log \frac{1}{1-|\varphi(a)|^2})^{-1}). $$
Since the operator $D_{\varphi,u}^1: \mathcal{Z} \rightarrow \mathcal{Z}_{\beta}$ is bounded, we get
\begin{align*}
\|D_{\varphi,u}^1 t_a\|_{\mathcal{Z}_{\beta}} \geq &  (1-|a|^2)^{\beta} |u''(a) t_a'(\varphi(a))| \\
& - (1-|a|^2)^{\beta} |(2 u'(a) \varphi'(a) + u(a) \varphi''(a)) t_a'' (\varphi(a))| -  |u(a) \varphi'^2 (a) t_a''' (\varphi(a))|\\
=&  (1-|a|^2)^{\beta} |u''(a)| \log \frac{1}{1-|\varphi(a)|^2} \\
&  - (1-|a|^2)^{\beta}  |2 u'(a) \varphi'(a) + u(a) \varphi''(a)| \frac{2 \overline{\varphi(a)}}{1-|\varphi(a)|^2} \\
& - (1-|a|^2)^{\beta}|u(a) \varphi'^2 (a) | \frac{2 \overline{\varphi(a)}^2}{(1-|\varphi(a)|^2)^2} (1+ (\log \frac{1}{1-|\varphi(a)|^2})^{-1}).
\end{align*}
Therefore,
\begin{align*}
\sup_{|\varphi(a)| > 1/2} & (1-|a|^2)^{\beta}  |u''(a)| \log \frac{1}{1-|\varphi(a)|^2}   \leq  \sup_{|\varphi(a)| > 1/2} \|D_{\varphi,u}^1 t_a\|_{\mathcal{Z}_{\beta}} \\
+ & \sup_{|\varphi(a)| > 1/2} (1-|a|^2)^{\beta}  |2 u'(a) \varphi'(a) + u(a) \varphi''(a)| \frac{2 \overline{\varphi(a)}}{1-|\varphi(a)|^2} \\
+ & \sup_{|\varphi(a)| > 1/2} (1-|a|^2)^{\beta} |u(a) \varphi'^2 (a) | \frac{2 \overline{\varphi(a)}^2}{(1-|\varphi(a)|^2)^2} (1+ (\log \frac{1}{1-|\varphi(a)|^2})^{-1})\\
< & \infty.
\end{align*}
On the other hand, since $u \in \mathcal{Z}_{\beta}$, we have
$$ \sup_{|\varphi(a)| \leq 1/2} (1-|a|^2)^{\beta} |u''(a)| \log \frac{2}{1-|\varphi(a)|^2} < \infty,$$
which completes the proof.
\end{proof}
%**************************************************************************************************
%**************************************************************************************************
%**************************************************************************************************
\section{\sc\bf Essential norms}
In this section we investigate essential norm estimates of $D_{ \varphi, u}^n: \mathcal{Z}_{\alpha} \rightarrow \mathcal{Z}_{\beta}$ in different cases of $\alpha$, $\beta$ and $n$.
In order to simplify the notations, we define
$$ E = \limsup_{|a| \rightarrow 1} \|D_{ \varphi, u}^n f_a\|_{\mathcal{Z}_{\beta}} , \quad F = \limsup_{|a| \rightarrow 1} \|D_{ \varphi, u}^n g_a\|_{\mathcal{Z}_{\beta}}, \quad G = \limsup_{|a| \rightarrow 1} \|D_{ \varphi, u}^n h_a\|_{\mathcal{Z}_{\beta}}, $$
and
\begin{align*}
A = & \limsup_{|\varphi (z)| \rightarrow 1} \frac{(1-|z|^2)^{\beta}}{(1-|\varphi(z)|^2)^{\alpha + n-2}} |u''(z)|, \\
B = & \limsup_{|\varphi (z)| \rightarrow 1} \frac{ (1-|z|^2)^{\beta} }{(1-|\varphi(z)|^2)^{\alpha + n -1}} |2u'(z) \varphi'(z) + u(z) \varphi''(z) |, \\
C = & \limsup_{|\varphi (z)| \rightarrow 1} \frac{ (1-|z|^2)^{\beta} }{(1-|\varphi(z)|^2)^{\alpha + n}} |u(z) \varphi'^2(z)|.
\end{align*}
%**************************************************************************************************
\begin{theorem} \label{g38}
Let $0<\alpha, \beta < \infty $ and $n \geq 1$ with $(n, \alpha)\neq (1, 1)$. If $D_{ \varphi, u}^n: \mathcal{Z}_{\alpha} \rightarrow \mathcal{Z}_{\beta}$ is a bounded operator, then
$$ \|D_{ \varphi, u}^n\|_{e,\mathcal{Z}_{\alpha} \rightarrow  \mathcal{Z}_{\beta}} \approx \limsup_{j \rightarrow \infty} j^{\alpha -2} \| D_{\varphi,u}^n I^{j+1}\|_{\mathcal{Z}_{\beta}} \approx \max \{ E, F, G \} \approx \max \{ A, B , C \}. $$
\end{theorem}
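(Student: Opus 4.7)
My plan is to close the chain
$$\|D_{\varphi,u}^n\|_{e,\mathcal{Z}_\alpha\to\mathcal{Z}_\beta} \gtrsim \limsup_{j\to\infty} j^{\alpha-2}\|D_{\varphi,u}^n I^{j+1}\|_{\mathcal{Z}_\beta} \gtrsim \max\{E,F,G\} \gtrsim \max\{A,B,C\} \gtrsim \|D_{\varphi,u}^n\|_{e,\mathcal{Z}_\alpha\to\mathcal{Z}_\beta},$$
from which the four claimed $\approx$-equivalences follow at once. The first three inequalities rely on standard compactness/test-function reasoning together with series and algebraic manipulations borrowed directly from Theorem~\ref{g34}, while the last one is the substantive step and requires a compact-approximation estimate.

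For the inequality $\|D_{\varphi,u}^n\|_e\gtrsim \limsup_{j} j^{\alpha-2}\|D_{\varphi,u}^n I^{j+1}\|_{\mathcal{Z}_\beta}$, I would observe that $\{j^{\alpha-2}I^{j+1}\}_{j\geq 1}$ is bounded in $\mathcal{Z}_\alpha$ (as already noted in the proof of Theorem~\ref{g34}) and converges to $0$ uniformly on compact subsets of $\mathbb{D}$ since $j^{\alpha-2}r^{j+1}\to 0$ for each $r<1$. Hence $\|K(j^{\alpha-2}I^{j+1})\|_{\mathcal{Z}_\beta}\to 0$ for every compact operator $K:\mathcal{Z}_\alpha\to\mathcal{Z}_\beta$, so $\|D_{\varphi,u}^n-K\|_{\mathcal{Z}_\alpha\to\mathcal{Z}_\beta}$ dominates the target $\limsup$, and infimizing over $K$ gives the bound. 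For the next link, $\limsup_j j^{\alpha-2}\|D_{\varphi,u}^n I^{j+1}\|_{\mathcal{Z}_\beta}\gtrsim \max\{E,F,G\}$, I would adapt the Taylor-series computation used in the step $(ii)\Rightarrow(iii)$ of Theorem~\ref{g34}: after expanding $f_a,g_a,h_a$ as power series in $\overline a z$ and splitting the sum at a cutoff $N$, the low frequencies contribute a factor $(1-|a|^2)^2$ times a polynomial in $|a|$ which vanishes as $|a|\to 1$, while the high-frequency tail is dominated by $\sup_{j>N} j^{\alpha-2}\|D_{\varphi,u}^n I^{j+1}\|_{\mathcal{Z}_\beta}$ multiplied by $(1-|a|^2)^2\sum_{j>N}(j+\alpha)|a|^{j+1}$, which is uniformly bounded in $a$; taking $\limsup_{|a|\to 1}$ and then $N\to\infty$ yields $E$, and the same scheme with $g_a,h_a$ handles $F$ and $G$.

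For $\max\{E,F,G\}\gtrsim \max\{A,B,C\}$ I would reuse the linear combinations $k_a,l_a,m_a$ of $f_a,g_a,h_a$ from the proof of $(iii)\Rightarrow(iv)$ in Theorem~\ref{g34}. Each is a uniformly bounded family in $\mathcal{Z}_\alpha$, and, for example, $k_{\varphi(a)}$ was engineered so that its $n$th and $(n+1)$st derivatives at $\varphi(a)$ vanish while its $(n+2)$nd derivative there is a nonzero multiple of $\overline{\varphi(a)}^{n+2}/(1-|\varphi(a)|^2)^{\alpha+n}$. Evaluating $\|D_{\varphi,u}^n k_{\varphi(a)}\|_{\mathcal{Z}_\beta}$ from below at $z=a$ isolates the $u(a)\varphi'(a)^2$ contribution, and after taking $\limsup_{|\varphi(a)|\to 1}$ (so that $|\varphi(a)|^{n+2}\to 1$) together with the triangle inequality bounding $\|D_{\varphi,u}^n k_{\varphi(a)}\|_{\mathcal{Z}_\beta}$ by a linear combination of $\|D_{\varphi,u}^n f_{\varphi(a)}\|_{\mathcal{Z}_\beta}, \|D_{\varphi,u}^n g_{\varphi(a)}\|_{\mathcal{Z}_\beta}, \|D_{\varphi,u}^n h_{\varphi(a)}\|_{\mathcal{Z}_\beta}$, this delivers $C\lesssim \max\{E,F,G\}$. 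The analogous manipulations with $l_a$ and $m_a$ produce $B$ and $A$.

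The remaining, main, step is the upper bound $\|D_{\varphi,u}^n\|_e\lesssim \max\{A,B,C\}$, which I would establish by compact approximation. For $r\in(0,1)$ let $T_r f(z)=f(rz)$; then $T_r:\mathcal{Z}_\alpha\to\mathcal{Z}_\alpha$ is compact, hence so is $D_{\varphi,u}^n\circ T_r:\mathcal{Z}_\alpha\to\mathcal{Z}_\beta$, giving $\|D_{\varphi,u}^n\|_e\leq \|D_{\varphi,u}^n - D_{\varphi,u}^n T_r\|_{\mathcal{Z}_\alpha\to\mathcal{Z}_\beta}$. Writing $h_{r,f}(w)=f^{(n)}(w)-r^n f^{(n)}(rw)$ so that $((D_{\varphi,u}^n-D_{\varphi,u}^n T_r)f)(z)=u(z)h_{r,f}(\varphi(z))$, the critical contribution to the operator norm is
$$\sup_{z\in\mathbb{D}}(1-|z|^2)^\beta\bigl|u''(z)h_{r,f}(\varphi(z))+(2u'(z)\varphi'(z)+u(z)\varphi''(z))h_{r,f}'(\varphi(z))+u(z)\varphi'(z)^2 h_{r,f}''(\varphi(z))\bigr|,$$
which I would split according to $|\varphi(z)|\leq r$ versus $|\varphi(z)|>r$. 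On the outer region, \eqref{eq11} bounds $|h_{r,f}^{(k)}(\varphi(z))|$ by $2\|f\|_{\mathcal{Z}_\alpha}/(1-|\varphi(z)|^2)^{\alpha+n+k-2}$, so the contribution is controlled by $2\sup_{|\varphi(z)|>r}$ of the $A,B,C$ suprema, which converge to $A,B,C$ as $r\to 1$. On the inner region, equicontinuity of $\{f^{(n+k)}:\|f\|_{\mathcal{Z}_\alpha}\leq 1\}$ on $\{|w|\leq r\}$ together with the boundedness of the weighted coefficients $(1-|z|^2)^\beta u''(z)$, $(1-|z|^2)^\beta(2u'\varphi'+u\varphi'')(z)$ and $(1-|z|^2)^\beta u(z)\varphi'(z)^2$ (inherited from Theorem~\ref{g34}) forces the contribution to vanish as $r\to 1$, uniformly in $\|f\|_{\mathcal{Z}_\alpha}\leq 1$; the evaluation terms $|(D_{\varphi,u}^n f-D_{\varphi,u}^n T_rf)(0)|$ and its first derivative similarly vanish uniformly. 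Letting $r\to 1$ yields $\|D_{\varphi,u}^n\|_e\lesssim A+B+C$, closing the cycle. The principal technical obstacle is preserving uniformity in the unit ball of $\mathcal{Z}_\alpha$ when combining the two regions as $r\to 1$; this is where Lemma~\ref{l31} (or rather its derivative version \eqref{eq11}) and the equicontinuity of bounded families of analytic functions on compacta play the decisive role.
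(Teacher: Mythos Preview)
Your overall architecture matches the paper's closely: the same weak-null sequences $j^{\alpha-2}I^{j+1}$ and $f_a,g_a,h_a$ for the lower bounds, the same linear combinations $k_a,l_a,m_a$ to isolate $A,B,C$, the same power-series splitting for the $E,F,G$--versus--monomials comparison, and the same dilation operators $T_r$ (the paper's $K_r$) for the upper bound. Organising it as a single cycle rather than several two-sided estimates is a harmless cosmetic difference.

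There is, however, a genuine gap in your upper-bound step. You split the seminorm contribution according to $|\varphi(z)|\le r$ versus $|\varphi(z)|>r$ with the \emph{same} $r$ as the dilation parameter, and then claim that equicontinuity on $\{|w|\le r\}$ forces the inner piece to vanish as $r\to 1$. This fails: the compact set $\{|w|\le r\}$ grows with $r$, and the relevant modulus of equicontinuity degrades. Concretely, for $\alpha=1$, $n=2$ take $f_j=j^{-1}I^{j+1}$ with $\|f_j\|_{\mathcal{Z}_1}\approx 1$; then $h_{r,f_j}(w)\approx j(1-r^{j+1})w^{j-1}$, and choosing $|w|=r$ and $j\approx(1-r)^{-1}$ gives $|h_{r,f_j}(w)|\approx(1-r)^{-1}$, which blows up. So $\sup_{\|f\|\le1}\sup_{|\varphi(z)|\le r}(1-|z|^2)^\beta|u''(z)|\,|h_{r,f}(\varphi(z))|$ need not tend to $0$, and your argument only yields $\|D_{\varphi,u}^n\|_e\lesssim A(u,\varphi,\alpha,\beta,n)+\cdots$ (the full suprema), not the desired $\limsup$ quantities $A,B,C$.

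The fix is exactly what the paper does: decouple the two parameters. Take a sequence $r_j\to 1$ for the dilation and an independent cutoff $r_N$ for the region split. With $N$ fixed, the inner region $\{|\varphi(z)|\le r_N\}$ is a fixed compact set, so equicontinuity of the unit ball gives $\sup_{\|f\|\le1}\sup_{|w|\le r_N}|h_{r_j,f}^{(k)}(w)|\to 0$ as $j\to\infty$; the outer region is controlled by $\sup_{|\varphi(z)|>r_N}$ of the $A,B,C$ integrands. Taking $\limsup_{j\to\infty}$ first and then $N\to\infty$ yields the bound $\lesssim\max\{A,B,C\}$. Once you make this adjustment your cycle closes.
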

\begin{proof}
First note that, as mentioned in the proof of Theorem \ref{g34}, $ \{{j^{\alpha -2} I^{j+1} }\}$ is a bounded sequence in $\mathcal{Z}_{\alpha}$. Since polynomials are dense in $\mathcal{Z}_{\alpha,0}$, by  \cite[Proposition 2.1]{cown}, $ \{{j^{\alpha -2}} I^{j+1} \}$ converges weakly to $0$ in $\mathcal{Z}_{\alpha,0}$ and hence in  $\mathcal{Z}_{\alpha}$. Therefore, for each compact operator $K : \mathcal{Z}_{\alpha} \rightarrow \mathcal{Z}_{\beta} $, we have $\lim_{j \rightarrow \infty} \| K(j^{\alpha -2} I^{j+1}) \|_{\mathcal{Z}_{\beta}} =0$ and consequently
\begin{align} \label{eq14}
\|D_{ \varphi, u}^n\|_{e,\mathcal{Z}_{\alpha} \rightarrow  \mathcal{Z}_{\beta}} = & \inf_{K} \|D_{ \varphi, u}^n - K\|_{\mathcal{Z}_{\alpha} \rightarrow  \mathcal{Z}_{\beta}} \nonumber \\
\gtrsim & \inf_{K} \limsup_{j \rightarrow \infty}\|(D_{ \varphi, u}^n - K)(j^{\alpha -2} I^{j+1})\|_{\mathcal{Z}_{\beta}} \nonumber \\
\gtrsim & \limsup_{j \rightarrow \infty} j^{\alpha -2} \| D_{ \varphi, u}^n I^{j+1}\|_{\mathcal{Z}_{\beta}}.
\end{align}
Next we prove that
$$ \|D_{ \varphi, u}^n\|_{e,\mathcal{Z}_{\alpha} \rightarrow  \mathcal{Z}_{\beta}} \approx \max \{ E, F, G \}. $$
One can see that $f_a, g_a, h_a \in \mathcal{Z}_{\alpha,0}$ having uniformly bounded $\|\cdot \|_{\mathcal{Z}_{\alpha}}$ norm. Also, these functions converge to $0$ uniformly on compact subsets of $\mathbb{D}$, as $|a| \rightarrow 1$, which implies their weak convergence to $0$ in $\mathcal{Z}_{\alpha}$. Therefore, for each compact operator $K : \mathcal{Z}_{\alpha} \rightarrow \mathcal{Z}_{\beta} $, we have
\begin{align*}
 \| D_{\varphi, u}^n - K \|_{\mathcal{Z}_{\alpha} \rightarrow \mathcal{Z}_{\beta}} \gtrsim &  \limsup_{|a| \rightarrow 1} \| (D_{\varphi, u}^n - K)f_a \|_{\mathcal{Z}_{\beta}} \\
\geq  & \limsup_{|a| \rightarrow 1} \| D_{\varphi, u}^n f_a\|_{\mathcal{Z}_{\beta}} - \limsup_{|a| \rightarrow 1} \| K f_a\|_{\mathcal{Z}_{\beta}} \\
= & \limsup_{|a| \rightarrow 1} \| D_{\varphi, u}^n f_a\|_{\mathcal{Z}_{\beta}} = E.
\end{align*}
The same argument is valid for $g_a$ and $h_a$, that is
$$  \| D_{\varphi, u}^n - K \|_{\mathcal{Z}_{\alpha} \rightarrow \mathcal{Z}_{\beta}} \gtrsim F, \quad  \| D_{\varphi, u}^n - K \|_{\mathcal{Z}_{\alpha} \rightarrow \mathcal{Z}_{\beta}} \gtrsim G.$$
Since $K : \mathcal{Z}_{\alpha} \rightarrow \mathcal{Z}_{\beta} $ was an arbitrary compact operator, we get
\begin{equation*} \label{eq15}
\| D_{\varphi, u}^n\|_{e, \mathcal{Z}_{\alpha} \rightarrow \mathcal{Z}_{\beta}}  = \inf_{K} \| D_{\varphi, u}^n - K \|_{\mathcal{Z}_{\alpha} \rightarrow \mathcal{Z}_{\beta}} \gtrsim \max \{ E, F, G \}.
\end{equation*}
For each $0 < r <1$ and $f\in H(\mathbb{D})$ the function $f_r\in H(\mathbb{D})$ is defined by $f_r (z) = f(rz)$ for all $z\in \mathbb{D}$. Note that $f_r \rightarrow f$ uniformly on compact subsets of $\mathbb{D}$ when $r \rightarrow 1$. Also, the operator $K_r:\mathcal{Z}_{\alpha} \rightarrow \mathcal{Z}_{\alpha}$ given by $K_r f = f_r$ is a compact operator for each $0 < r <1$.
Let $\{r_j\}$ be a sequence in $(0,1)$ converging to $1$ as $j \rightarrow \infty$.
Then, $D_{\varphi, u}^n K_{r_j} :\mathcal{Z}_{\alpha} \rightarrow \mathcal{Z}_{\beta} $ is a compact operator for each $j\geq 1$.  Hence,
$$ \| D_{\varphi, u}^n\|_{e, \mathcal{Z}_{\alpha} \rightarrow \mathcal{Z}_{\beta}} \leq \limsup_{j \rightarrow \infty}
\| D_{\varphi, u}^n - D_{\varphi, u}^n K_{r_j}\|_{\mathcal{Z}_{\alpha} \rightarrow \mathcal{Z}_{\beta}}. $$
Therefore, it is enough to prove that
$$  \limsup_{j \rightarrow \infty}
\| D_{\varphi, u}^n - D_{\varphi, u}^n K_{r_j}\|_{\mathcal{Z}_{\alpha} \rightarrow \mathcal{Z}_{\beta}} \lesssim \max \{ E, F, G\}. $$
Note that for every $f \in \mathcal{Z}_{\alpha}$ with $\|f\|_{\mathcal{Z}_{\alpha}} \leq 1$ we have
\begin{align*}
\| & (D_{\varphi, u}^n - D_{\varphi, u}^n K_{r_j})f\|_{\mathcal{Z}_{\beta}}   \\
& = |u(0)f^{(n)} (\varphi(0)) - r_j^n  u(0)f^{(n)} (r_j\varphi(0)) | \\
& \ \ + |u'(0)(f-f_{r_j})^{(n)} (\varphi(0)) + u(0)(f-f_{r_j})^{(n+1)} (\varphi(0))\varphi'(0)| \\
& \ \ + \|u\cdot (f-f_{r_j})^{(n)} \circ \varphi\|_{s\mathcal{Z}_{\beta}} .
\end{align*}
Since $r_j \rightarrow 1$, we have 
$$ \limsup_{j \rightarrow \infty} |u(0)f^{(n)} (\varphi(0)) - r_j^n  u(0)f^{(n)} (r_j\varphi(0)) | =0,$$
and
$$ \limsup_{j \rightarrow \infty} |u(0)(f-f_{r_j})^{(n)} (\varphi(0)) + u(0)(f-f_{r_j})^{(n+1)} (\varphi(0))\varphi'(0)| =0. $$
Therefore, for each $N\geq 1$, we get 
\begin{align*}
\limsup_{j \rightarrow \infty} & \|  (D_{\varphi, u}^n - D_{\varphi, u}^n K_{r_j})f\|_{\mathcal{Z}_{\beta}} \leq
\limsup_{j \rightarrow \infty} \|u\cdot (f-f_{r_j})^{(n)} \circ \varphi\|_{s\mathcal{Z}_{\beta}} \\
\leq & \limsup_{j \rightarrow \infty}  \sup_{|\varphi(z)| \leq r_N} (1-|z|^2)^{\beta} |(f - f_{r_j})^{(n)} (\varphi(z))| |u''(z)| \\
& + \limsup_{j \rightarrow \infty}  \sup_{|\varphi(z)| > r_N} (1-|z|^2)^{\beta} |(f - f_{r_j})^{(n)} (\varphi(z))| |u''(z)| \\
& + \limsup_{j \rightarrow \infty}  \sup_{|\varphi(z)| \leq r_N} (1-|z|^2)^{\beta} |(f - f_{r_j})^{(n +1)} (\varphi(z))| |2u'(z) \varphi'(z) + u(z) \varphi''(z)| \\
& + \limsup_{j \rightarrow \infty}  \sup_{|\varphi(z)| > r_N} (1-|z|^2)^{\beta} |(f - f_{r_j})^{(n +1)} (\varphi(z))| |2u'(z) \varphi'(z) + u(z) \varphi''(z)| \\
& + \limsup_{j \rightarrow \infty}  \sup_{|\varphi(z)| \leq r_N} (1-|z|^2)^{\beta} |(f - f_{r_j})^{(n +2)} (\varphi(z))| | u(z) \varphi'^2(z)| \\
& + \limsup_{j \rightarrow \infty}  \sup_{|\varphi(z)| > r_N} (1-|z|^2)^{\beta} |(f - f_{r_j})^{(n +2)} (\varphi(z))| | u(z) \varphi'^2(z)| \\
:= & S_1 + S_2 + S_3 + S_4 + S_5 + S_6,
\end{align*}
respectively. Since the operator $D_{ \varphi, u}^n: \mathcal{Z}_{\alpha} \rightarrow \mathcal{Z}_{\beta}$ is bounded, by Theorem \ref{g34}, we have
$$ \sup_{z \in \mathbb{D}}  (1-|z|^2)^{\beta} |u''(z)| < \infty, \quad  \sup_{z \in \mathbb{D}}  (1-|z|^2)^{\beta} |2u'(z) \varphi'(z) + u(z) \varphi''(z)| < \infty,$$
$$\sup_{z \in \mathbb{D}}  (1-|z|^2)^{\beta} |u(z) \varphi'^2(z) | < \infty.$$
Also, for each $k\geq 1$, $f_{r_j}^{(k)} \rightarrow f^{(k)}$ uniformly on compact subsets of $\mathbb{D}$, since $f_{r_j} \rightarrow f$ uniformly on compact subsets of $\mathbb{D}$. These facts imply that
$$S_1 = S_3 = S_5 =0,$$
and hence we just need to estimate $S_2$, $S_4$ and $S_6$. By applying \eqref{eq11}, Lemma \ref{l31} and using the test functions $m_a$ defined in Theorem \ref{g34}, we have
\begin{align} \label{eq16}
S_2 = & \limsup_{j \rightarrow \infty}  \sup_{|\varphi(z)| > r_N} (1-|z|^2)^{\beta} |(f - f_{r_j})^{(n)} (\varphi(z))| |u''(z)| \nonumber \\
\leq & \limsup_{j \rightarrow \infty}  \sup_{|\varphi(z)| > r_N} (1-|z|^2)^{\beta} \frac{\|f -f_{r_j}\|_{\mathcal{Z}_{\alpha}}}{(1-|\varphi(z)|^2)^{\alpha + n -2}} |u''(z)| \nonumber  \\
\lesssim & \sup_{|\varphi(z)| > r_N} (1-|z|^2)^{\beta} \frac{|\varphi(z)|^n}{|\varphi(z)|^n(1-|\varphi(z)|^2)^{\alpha + n -2}} |u''(z)| \\
\lesssim  & \sup_{|a| > r_N} \frac{\alpha + n+2}{2 \alpha (\alpha +1) \cdots (\alpha + n -1)} \|D_{ \varphi, u}^n (m_a)\|_{\mathcal{Z}_{\beta}} \nonumber \\
\lesssim & \sup_{|a| > r_N} \|D_{ \varphi, u}^n \left( (\alpha +n+1)f_a - 2\alpha g_a + \frac{\alpha (\alpha +1)}{\alpha+n +2} h_a \right)\|_{\mathcal{Z}_{\beta}} \nonumber \\
\leq  & (\alpha +n+1) \sup_{|a| > r_N} \|D_{ \varphi, u}^n f_a\|_{\mathcal{Z}_{\beta}} + 2 \alpha  \sup_{|a| > r_N} \|D_{ \varphi, u}^n g_a\|_{\mathcal{Z}_{\beta}} \nonumber \\
&  + \frac{\alpha (\alpha +1)}{\alpha+n +2} \sup_{|a| > r_N} \|D_{ \varphi, u}^n h_a\|_{\mathcal{Z}_{\beta}}. \nonumber
\end{align}
Therefore, as $N \rightarrow \infty$, we get
\begin{align*}
S_2 \lesssim &  \limsup_{|a| \rightarrow 1} \|D_{ \varphi, u}^n f_a\|_{\mathcal{Z}_{\beta}} + \limsup_{|a| \rightarrow 1} \|D_{ \varphi, u}^n g_a\|_{\mathcal{Z}_{\beta}} + \limsup_{|a| \rightarrow 1} \|D_{ \varphi, u}^n h_a\|_{\mathcal{Z}_{\beta}} \\
\lesssim & \max \{ E, F, G \}.
\end{align*}
Similarly, by applying the test functions $k_a$ and $l_a$, one can prove 
$$S_4 \lesssim \max \{ E, F, G \}, \quad S_6 \lesssim \max \{ E, F, G \},$$
and therefore
\begin{equation} \label{eq17}
\| D_{\varphi, u}^n\|_{e, \mathcal{Z}_{\alpha} \rightarrow \mathcal{Z}_{\beta}} \leq \limsup_{j \rightarrow \infty}
\| D_{\varphi, u}^n - D_{\varphi, u}^n K_{r_j}\|_{\mathcal{Z}_{\alpha} \rightarrow \mathcal{Z}_{\beta}} \lesssim \max \{ E, F, G \}.
\end{equation}
By applying \eqref{eq16}, we get
$$ S_2 \lesssim 	  \limsup_{|\varphi(z)| \rightarrow 1}  \frac{(1-|z|^2)^{\beta}}{(1-|\varphi(z)|^2)^{\alpha + n -2}} |u''(z)| =A, $$
and similarly one can see that $S_4 \lesssim B$ and $S_6 \lesssim C$.
Therefore,
\begin{equation} \label{eq18}
\| D_{\varphi, u}^n\|_{e, \mathcal{Z}_{\alpha} \rightarrow \mathcal{Z}_{\beta}} \lesssim  \max \{ A, B, C \}.
\end{equation}
In order to prove the converse of \eqref{eq18}, let $\{ z_j \}$ be a sequence in $\mathbb{D}$ such that
$|\varphi(z_j)| \rightarrow 1$, as $j \rightarrow \infty$, and define the test functions
$$ k_j = k_{\varphi(z_j)}, \quad l_j = l_{\varphi(z_j)}, \quad m_j = m_{\varphi(z_j)},$$
where $k_a$, $l_a$ and $m_a$ are defined in the proof of Theorem \ref{g34}. Then, $\{k_j\}, \{l_j\}$ and $\{m_j\}$ are bounded sequences in $\mathcal{Z}_{\alpha,0}$ which converge to zero on compact subsets of $\mathbb{D}$. Also, for each $j\geq 1$ we have
\begin{align*}
{k^{(n)}_j} (\varphi(z_j)) & = {k^{(n+1)}_j} (\varphi(z_j)) =0,\\
{k^{(n+2)}_{j}}(\varphi(z_j)) & = 2 \alpha (\alpha +1) \cdots (\alpha +n) \frac{|\varphi(z_j)|^{n+2}}{(1-|\varphi(z_j)|^2)^{\alpha +n}},\\
{l^{(n)}_j} (\varphi(z_j)) & = {l^{(n+2)}_j} (\varphi(z_j)) =0,\\
{l^{(n+1)}_{j}}(\varphi(z_j)) & = -\alpha (\alpha +1) \cdots (\alpha +n) \frac{|\varphi(z_j)|^{n+1}}{(1-|\varphi(z_j)|^2)^{\alpha + n -1}},\\
{m^{(n+1)}_j} (\varphi(z_j)) & = {m^{(n+2)}_j} (\varphi(z_j)) =0,\\
{m^{(n)}_{j}}(\varphi(z_j)) & = \frac{\alpha + n +2}{2 \alpha (\alpha +1) \cdots (\alpha +n -1)} \frac{|\varphi(z_j)|^{n}}{(1-|\varphi(z_j)|^2)^{\alpha + n -2}}.
\end{align*}
Thus, for any compact operator $K: \mathcal{Z}_{\alpha} \rightarrow \mathcal{Z}_{\beta}$ we get 
\begin{align*}
\| D_{\varphi, u}^n - K\|_{\mathcal{Z}_{\alpha} \rightarrow \mathcal{Z}_{\beta}} \gtrsim &  \limsup_{j \rightarrow \infty}
\| D_{\varphi, u}^n (k_j)\|_{\mathcal{Z}_{\beta}} - \limsup_{j \rightarrow \infty} \| K (k_j)\|_{\mathcal{Z}_{\beta}} \\
\gtrsim   & \limsup_{j \rightarrow \infty}  \frac{(1-|z_j|^2)^{\beta} |\varphi(z_j)|^{n+2}}{(1-|\varphi(z_j)|^2)^{\alpha +n}} |u(z_j)| |\varphi'^2(z_j)| \\
=  & \limsup_{|\varphi(z)| \rightarrow 1}  \frac{(1-|z|^2)^{\beta}}{(1-|\varphi(z)|^2)^{\alpha +n}} |u(z)| |\varphi'^2(z)|,
\end{align*}
\begin{align*}
\| D_{\varphi, u}^n - K\|_{\mathcal{Z}_{\alpha} \rightarrow \mathcal{Z}_{\beta}} \gtrsim &  \limsup_{j \rightarrow \infty}
\| D_{\varphi, u}^n (l_j)\|_{\mathcal{Z}_{\beta}} - \limsup_{j \rightarrow \infty} \| K (l_j)\|_{\mathcal{Z}_{\beta}} \\
\gtrsim  & \limsup_{j \rightarrow \infty}  \frac{(1-|z_j|^2)^{\beta} |\varphi(z_j)|^{n+1}}{(1-|\varphi(z_j)|^2)^{\alpha +n -1}} |2 u'(z_j) \varphi'(z_j) + u(z_j)\varphi''(z_j)| \\
=  & \limsup_{|\varphi(z)| \rightarrow 1} \frac{(1-|z|^2)^{\beta}}{(1-|\varphi(z)|^2)^{\alpha +n -1}} |2 u'(z) \varphi'(z) + u(z)\varphi''(z)|,
\end{align*}
and
\begin{align*}
\| D_{\varphi, u}^n - K\|_{\mathcal{Z}_{\alpha} \rightarrow \mathcal{Z}_{\beta}} \gtrsim &  \limsup_{j \rightarrow \infty}
\| D_{\varphi, u}^n (m_j)\|_{\mathcal{Z}_{\beta}} - \limsup_{j \rightarrow \infty} \| K (m_j)\|_{\mathcal{Z}_{\beta}} \\
\gtrsim  & \limsup_{j \rightarrow \infty}  \frac{(1-|z_j|^2)^{\beta} |\varphi(z_j)|^{n}}{(1-|\varphi(z_j)|^2)^{\alpha +n -2}} |u''(z_j)| \\
=  & \limsup_{|\varphi(z)| \rightarrow 1} \frac{(1-|z|^2)^{\beta}}{(1-|\varphi(z)|^2)^{\alpha +n -2}} |u''(z)|.
\end{align*}
Hence,
\begin{align*}
\| D_{\varphi, u}^n\|_{e, \mathcal{Z}_{\alpha} \rightarrow \mathcal{Z}_{\beta}}  = & \inf_{K} \| D_{\varphi, u}^n - K\|_{\mathcal{Z}_{\alpha} \rightarrow \mathcal{Z}_{\beta}} \\
\gtrsim & \limsup_{|\varphi(z)| \rightarrow 1}  \frac{(1-|z|^2)^{\beta}}{(1-|\varphi(z)|^2)^{\alpha +n}} |u(z)| |\varphi'^2(z)| = C,
\end{align*}
and similarly
$$\| D_{\varphi, u}^n\|_{e, \mathcal{Z}_{\alpha} \rightarrow \mathcal{Z}_{\beta}} \gtrsim B, \quad \| D_{\varphi, u}^n\|_{e, \mathcal{Z}_{\alpha} \rightarrow \mathcal{Z}_{\beta}} \gtrsim A,$$
which implies that
\begin{equation*} \label{eq19}
\| D_{\varphi, u}^n\|_{e, \mathcal{Z}_{\alpha} \rightarrow \mathcal{Z}_{\beta}} \gtrsim \max \{A, B, C\}.
\end{equation*}
To prove the final case, first note that by Theorem \ref{g34} we have
$$M= \sup_{j \geq 1} j^{\alpha -2} \| D_{\varphi,u}^n I^{j+1}\|_{\mathcal{Z}_{\beta}} < \infty. $$
Recall that, for each $a, z \in \mathbb{D}$
$$ f_a (z) = \frac{(1-|a|^2)^2}{(1-\overline{a}z)^{\alpha}} = (1-|a|^2)^2 \sum_{j=0}^{\infty} \frac{\Gamma (j+ \alpha)}{j! \Gamma (\alpha)} \overline{a}^j z^j.$$
For each $N \geq 1$, as in the proof of Theorem \ref{g34} we have
\begin{align*}
\| & D_{\varphi, u}^n  f_a\|_{\mathcal{Z}_{\beta}} \lesssim (1-|a|^2)^{2} \sum_{j=0}^{\infty} (j+\alpha)|a|^{j+1} j^{\alpha -2} \| D_{\varphi, u}^n I^{j+1}\|_{\mathcal{Z}_{\beta}} \\
=  & (1-|a|^2)^{2} \left ( \sum_{j=0}^{N-1}  (j+\alpha)|a|^{j+1} j^{\alpha -2} \| D_{\varphi, u}^n I^{j+1}\|_{\mathcal{Z}_{\beta}} + \sum_{j=N}^{\infty}  (j+\alpha)|a|^{j+1} j^{\alpha -2} \| D_{\varphi, u}^n I^{j+1}\|_{\mathcal{Z}_{\beta}} \right ) \\
\lesssim  & (1-|a|^{2})^2 \sum_{j=0}^{N-1}  (j+\alpha)|a|^{j+1} + \sup_{j \geq N} j^{\alpha -2} \| D_{\varphi,u}^n I^{j+1}\|_{\mathcal{Z}_{\beta}} \sum_{j=N}^{\infty}  (j+\alpha)|a|^{j+1} ,
\end{align*}
which implies that
$$\limsup_{|a| \rightarrow 1}\| D_{\varphi, u}^n  f_a\|_{\mathcal{Z}_{\beta}}  \lesssim
 \sup_{j \geq N} j^{\alpha -2} \| D_{\varphi,u}^n I^{j+1}\|_{\mathcal{Z}_{\beta}}.$$
Therefore, when $N \rightarrow \infty$, we get
$$E = \limsup_{|a| \rightarrow 1}\| D_{\varphi, u}^n  f_a\|_{\mathcal{Z}_{\beta}}  \lesssim
\limsup_{j \rightarrow \infty} j^{\alpha -2} \| D_{\varphi,u}^n I^{j+1}\|_{\mathcal{Z}_{\beta}}.$$
A similar approach implies that
$$F \lesssim \limsup_{j \rightarrow \infty}j^{\alpha -2} \| D_{\varphi,u}^n I^{j+1}\|_{\mathcal{Z}_{\beta}}, \quad G \lesssim \limsup_{j \rightarrow \infty} j^{\alpha -2} \| D_{\varphi,u}^n I^{j+1}\|_{\mathcal{Z}_{\beta}},$$
and, by applying \eqref{eq17}, we conclude that
\begin{equation*} \label{eq20}
\| D_{\varphi, u}^n\|_{e, \mathcal{Z}_{\alpha} \rightarrow \mathcal{Z}_{\beta}} \lesssim \max \{ E, F, G \} \lesssim
\limsup_{j \rightarrow \infty}j^{\alpha -2} \| D_{\varphi,u}^n I^{j+1}\|_{\mathcal{Z}_{\beta}}.
\end{equation*}
This, along with \eqref{eq14}, completes the proof.
\end{proof}
%**************************************************************************************************
\begin{cor} \label{cor39}
Let $0<\alpha, \beta < \infty $ and $n \geq 1$ with $(n, \alpha)\neq (1, 1)$. If $D_{ \varphi, u}^n: \mathcal{Z}_{\alpha} \rightarrow \mathcal{Z}_{\beta}$ is a bounded operator, then the following statements are equivalent:
\begin{itemize}
\item[$(i)$] $D_{ \varphi, u}^n: \mathcal{Z}_{\alpha} \rightarrow \mathcal{Z}_{\beta}$ is compact.
\item[$(ii)$] $\limsup_{j \rightarrow \infty} j^{\alpha -2} \| D_{\varphi,u}^n I^{j+1}\|_{\mathcal{Z}_{\beta}} =0 $.
\item[$(iii)$] $\limsup_{|a| \rightarrow 1} \|D_{ \varphi, u}^n f_a\|_{\mathcal{Z}_{\beta}}= \limsup_{|a| \rightarrow 1} \|D_{ \varphi, u}^n g_a\|_{\mathcal{Z}_{\beta}}= \limsup_{|a| \rightarrow 1} \|D_{ \varphi, u}^n h_a\|_{\mathcal{Z}_{\beta}} =0 $.
\item[$(iv)$] 
\begin{align*}
 & \limsup_{|\varphi (z)| \rightarrow 1} \frac{(1-|z|^2)^{\beta}}{(1-|\varphi(z)|^2)^{\alpha + n-2}} |u''(z)|=0, \\
 & \limsup_{|\varphi (z)| \rightarrow 1} \frac{ (1-|z|^2)^{\beta} }{(1-|\varphi(z)|^2)^{\alpha + n -1}} |2u'(z) \varphi'(z) + u(z) \varphi''(z) |=0, \\
 & \limsup_{|\varphi (z)| \rightarrow 1} \frac{ (1-|z|^2)^{\beta} }{(1-|\varphi(z)|^2)^{\alpha + n}} |u(z) \varphi'^2(z)|=0.
\end{align*}
\end{itemize}
\end{cor}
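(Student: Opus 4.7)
The corollary is a direct consequence of Theorem \ref{g38} together with the defining property that an operator $T \in \mathcal{B}(X,Y)$ is compact if and only if $\|T\|_{e,X \to Y} = 0$. My plan is to unwind this observation across all four conditions.

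First, I would observe that under the hypotheses (namely $0 < \alpha,\beta < \infty$, $n \geq 1$, and $(n,\alpha) \neq (1,1)$ with $D_{\varphi,u}^n$ bounded from $\mathcal{Z}_\alpha$ to $\mathcal{Z}_\beta$), Theorem \ref{g38} gives the chain of equivalences
\begin{equation*}
\|D_{\varphi,u}^n\|_{e,\mathcal{Z}_\alpha \to \mathcal{Z}_\beta} \approx \limsup_{j \to \infty} j^{\alpha-2}\|D_{\varphi,u}^n I^{j+1}\|_{\mathcal{Z}_\beta} \approx \max\{E,F,G\} \approx \max\{A,B,C\}.
\end{equation*}
Since each of the quantities on the right is a nonnegative real number and the notation $\approx$ means comparability up to positive multiplicative constants, each of these quantities is zero precisely when $\|D_{\varphi,u}^n\|_{e,\mathcal{Z}_\alpha \to \mathcal{Z}_\beta} = 0$.

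Second, I would invoke the standard fact that $D_{\varphi,u}^n : \mathcal{Z}_\alpha \to \mathcal{Z}_\beta$ is compact if and only if $\|D_{\varphi,u}^n\|_{e,\mathcal{Z}_\alpha \to \mathcal{Z}_\beta} = 0$. This immediately yields $(i) \Leftrightarrow (ii)$, since the limsup in (ii) is exactly the middle expression above. For $(i) \Leftrightarrow (iii)$ and $(i) \Leftrightarrow (iv)$, the observation is that since $E,F,G \geq 0$, the condition $\max\{E,F,G\} = 0$ is equivalent to $E = F = G = 0$, which is statement (iii); likewise $\max\{A,B,C\} = 0$ is equivalent to $A = B = C = 0$, which is statement (iv).

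There is no real obstacle here beyond carefully invoking Theorem \ref{g38}; the entire proof amounts to replacing each $\approx$-equivalent expression by $0$ and noting that a finite maximum of nonnegative quantities vanishes iff each quantity vanishes. The proof therefore reduces to two or three lines stating exactly this deduction.
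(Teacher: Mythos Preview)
Your proposal is correct and matches the paper's approach exactly: the paper states this result as a corollary of Theorem~\ref{g38} without any explicit proof, treating it as the immediate consequence you describe---compactness is equivalent to vanishing essential norm, and the $\approx$-equivalences of Theorem~\ref{g38} then force all the listed quantities to vanish together.
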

%**************************************************************************************************
We next prove the result of Theorem \ref{g38} in the case of $n =1$ and $\alpha =1$.
\begin{theorem} \label{g40}
Let $0<\beta <\infty$ and  $D_{ \varphi, u}^1: \mathcal{Z} \rightarrow \mathcal{Z}_{\beta}$ be a bounded operator. Then,
\begin{align*}
\|D_{ \varphi, u}^1\|_{e,\mathcal{Z} \rightarrow  \mathcal{Z}_{\beta}} \approx \max \Big\{ & \limsup_{|\varphi(z)| \rightarrow 1} (1-|z|^2)^{\beta} |u''(z)| \log \frac{2} {(1-|\varphi(z)|^2)^{\alpha}},  \\
 & \limsup_{|\varphi(z)| \rightarrow 1} \frac{(1-|z|^2)^{\beta}}{1-|\varphi(z)|^2} |2u'(z) \varphi'(z) + u(z) \varphi''(z)|, \\
& \limsup_{|\varphi(z)| \rightarrow 1} \frac{(1-|z|^2)^{\beta}}{(1-|\varphi(z)|^2)^2} |u(z) \varphi'^2(z)| \Big\}.
\end{align*}
\end{theorem}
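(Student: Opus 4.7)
The proof plan mirrors the structure of Theorem \ref{g38}, but with the estimates adapted to the borderline regime $\alpha = n = 1$, where Lemma \ref{l31}$(ii)$ gives only the logarithmic bound $|f'(z)| \lesssim \|f\|_{\mathcal{Z}}\log\frac{2}{1-|z|}$ in place of the polynomial bound used in Theorem \ref{g38}. As before, I would sandwich $\|D_{\varphi,u}^1\|_{e,\mathcal{Z}\to\mathcal{Z}_\beta}$ between an upper bound obtained from the compact dilation operators $K_{r_j}f(z)=f(r_j z)$ with $r_j\nearrow 1$, and a lower bound obtained by testing against three explicit families of functions in $\mathcal{Z}_{1,0}$ that converge weakly to zero.

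For the upper bound, I would expand $\|(D_{\varphi,u}^1 - D_{\varphi,u}^1 K_{r_j})f\|_{\mathcal{Z}_\beta}$ for $\|f\|_{\mathcal{Z}}\le 1$ and split the seminorm part into six suprema according to whether $|\varphi(z)|\le r_N$ or $|\varphi(z)|>r_N$, exactly as in Theorem \ref{g38}. The three suprema over $|\varphi(z)|\le r_N$ vanish in the limit $j\to\infty$ because $(f-f_{r_j})^{(k)}\to 0$ uniformly on compacta for $k=1,2,3$, together with the boundedness conditions from Theorem \ref{g35}. For the three remaining suprema I would apply Lemma \ref{l31}$(ii)$ to bound $|(f-f_{r_j})'(\varphi(z))|\lesssim \log\frac{2}{1-|\varphi(z)|^2}$ and \eqref{eq11} to bound $|(f-f_{r_j})''(\varphi(z))|$ and $|(f-f_{r_j})'''(\varphi(z))|$ by $1/(1-|\varphi(z)|^2)$ and $1/(1-|\varphi(z)|^2)^2$ respectively; letting $N\to\infty$ then produces the three $\limsup_{|\varphi(z)|\to 1}$ quantities appearing in the theorem statement.

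For the lower bound, I would employ the three test-function families from the proof of Theorem \ref{g35}. The functions $k_a$ and $l_a$ defined there isolate, via their values $k_{\varphi(a)}'''(\varphi(a))$ and $l_{\varphi(a)}''(\varphi(a))$, the quantities involving $|u(z)\varphi'^2(z)|/(1-|\varphi(z)|^2)^2$ and $|2u'(z)\varphi'(z)+u(z)\varphi''(z)|/(1-|\varphi(z)|^2)$ respectively, while the logarithmic test function $t_a(z) = h(\overline{\varphi(a)}z)/\overline{\varphi(a)}\cdot\bigl(\log\frac{1}{1-|\varphi(a)|^2}\bigr)^{-1}$ targets the quantity $|u''(z)|\log\frac{2}{1-|\varphi(z)|^2}$. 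Each family is bounded in $\mathcal{Z}$ and converges to zero uniformly on compacta, hence weakly in $\mathcal{Z}$, so for any compact $K:\mathcal{Z}\to\mathcal{Z}_\beta$ the triangle inequality gives $\|D_{\varphi,u}^1 - K\|_{\mathcal{Z}\to\mathcal{Z}_\beta}\gtrsim \limsup_{j\to\infty}\|D_{\varphi,u}^1 \phi_j\|_{\mathcal{Z}_\beta}$ for each such family $\{\phi_j\}$ taken along a sequence $\{z_j\}$ with $|\varphi(z_j)|\to 1$.

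The main obstacle will be the extraction of the $|u''|\log$ lower bound from $\|D_{\varphi,u}^1 t_a\|_{\mathcal{Z}_\beta}$: since $t_a''(\varphi(a))$ and $t_a'''(\varphi(a))$ are both nonzero, evaluating $|(u\cdot t_a\circ\varphi)''(a)|$ contributes not only the wanted $(1-|a|^2)^\beta|u''(a)|\log\frac{1}{1-|\varphi(a)|^2}$ term but also cross terms of order $(1-|a|^2)^\beta|2u'(a)\varphi'(a)+u(a)\varphi''(a)|/(1-|\varphi(a)|^2)$ and $(1-|a|^2)^\beta|u(a)\varphi'^2(a)|/(1-|\varphi(a)|^2)^2$. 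These admixtures will have to be subtracted via the triangle inequality, exactly as at the end of the proof of Theorem \ref{g35}, and absorbed using the $B$- and $C$-type lower bounds already obtained from the $k_a$ and $l_a$ families. Taking the infimum over $K$ then yields the reverse $\gtrsim$ estimate, and combining this with the upper bound completes the proof.
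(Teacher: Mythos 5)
Your proposal is correct and follows essentially the same route as the paper: the upper bound via the dilation operators $K_{r_j}$ with the six-fold splitting, Lemma \ref{l31}$(ii)$ and \eqref{eq11}, and the lower bound via the test families $k_a$, $l_a$, $t_a$ from Theorem \ref{g35}, including the key step of subtracting the cross terms produced by $t_a''(\varphi(a))$ and $t_a'''(\varphi(a))$ and absorbing them with the already-established $B$- and $C$-type essential norm lower bounds. No substantive differences from the paper's argument.
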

\begin{proof}
Let $\{ z_j \}$ be a sequence in $\mathbb{D}$ such that
$|\varphi(z_j)| \rightarrow 1$, as $j \rightarrow \infty$. Consider the test functions $ k_j = k_{\varphi(z_j)}$ and $l_j = l_{\varphi(z_j)} $ defined in the proof of Theorem \ref{g35}. Then, $\{k_j\}$ and $\{l_j\}$ are bounded sequences in $\mathcal{Z}_{0}$ which converge to zero on compact subsets of $\mathbb{D}$. Also,
$${k^{'}_j} (\varphi(z_j)) = {k^{''}_j} (\varphi(z_j)) =0 , \quad {k^{'''}_{j}}(\varphi(z_j)) = 16 \frac{|\varphi(z_j)|^{3}}{(1-|\varphi(z_j)|^2)^{2}},$$
and
$$ {l^{'}_j} (\varphi(z_j)) = {l^{'''}_j} (\varphi(z_j)) =0, \quad {l^{''}_{j}}(\varphi(z_j)) = -2 \frac{|\varphi(z_j)|^{2}}{1-|\varphi(z_j)|^2}.$$
Thus, for any compact operator $K: \mathcal{Z} \rightarrow \mathcal{Z}_{\beta}$, we have
\begin{align*}
\| D_{\varphi, u}^1 - K\|_{\mathcal{Z} \rightarrow \mathcal{Z}_{\beta}} \gtrsim & \limsup_{j \rightarrow \infty}
\| D_{\varphi, u}^1 (k_j)\|_{\mathcal{Z}_{\beta}} - \limsup_{j \rightarrow \infty} \| K (k_j)\|_{\mathcal{Z}_{\beta}} \\
\gtrsim  & \limsup_{j \rightarrow \infty}  \frac{(1-|z_j|^2)^{\beta} |\varphi(z_j)|^{3}}{(1-|\varphi(z_j)|^2)^{2}} |u(z_j)| |\varphi'^2(z_j)| \\
= & \limsup_{|\varphi(z)| \rightarrow 1}  \frac{(1-|z|^2)^{\beta}}{(1-|\varphi(z)|^2)^{2}} |u(z)| |\varphi'^2(z)|,
\end{align*}
and
\begin{align*}
\| D_{\varphi, u}^1 - K\|_{\mathcal{Z} \rightarrow \mathcal{Z}_{\beta}} \gtrsim & \limsup_{j \rightarrow \infty}
\| D_{\varphi, u}^1 (l_j)\|_{\mathcal{Z}_{\beta}} - \limsup_{j \rightarrow \infty} \| K (l_j)\|_{\mathcal{Z}_{\beta}} \\
\gtrsim  & \limsup_{j \rightarrow \infty}  \frac{(1-|z_j|^2)^{\beta} |\varphi(z_j)|^{2}}{1-|\varphi(z_j)|^2} |2 u'(z_j) \varphi'(z_j) + u(z_j)\varphi''(z_j)| \\
= & \limsup_{|\varphi(z)| \rightarrow 1} \frac{(1-|z|^2)^{\beta}}{1-|\varphi(z)|^2} |2 u'(z) \varphi'(z) + u(z)\varphi''(z)|.
\end{align*}
Therefore,
$$\| D_{\varphi, u}^1\|_{e, \mathcal{Z} \rightarrow \mathcal{Z}_{\beta}}  = \inf_{K} \| D_{\varphi, u}^1 - K\|_{\mathcal{Z} \rightarrow \mathcal{Z}_{\beta}}
\gtrsim \limsup_{|\varphi(z)| \rightarrow 1}  \frac{(1-|z|^2)^{\beta}}{(1-|\varphi(z)|^2)^{2}} |u(z)| |\varphi'^2(z)|,$$
and also
$$\| D_{\varphi, u}^1\|_{e, \mathcal{Z} \rightarrow \mathcal{Z}_{\beta}} \gtrsim \limsup_{|\varphi(z)| \rightarrow 1} \frac{(1-|z|^2)^{\beta}}{1-|\varphi(z)|^2} |2u'(z) \varphi'(z) + u(z) \varphi''(z)|.$$
Now, consider the test functions $t_j = t_{\varphi(z_j)}$ defined in the proof of Theorem \ref{g35}. Then, $\{t_j\}$ is a bounded sequence in $\mathcal{Z}_{0}$ which converges to zero on compact subsets of $\mathbb{D}$. Moreover,
$$t_j'(\varphi(z_j)) = \log \frac{1}{1-|\varphi(z_j)|^2}, \quad t_j''(\varphi(z_j)) = \frac{2 \overline{\varphi(z_j)}}{1-|\varphi(z_j)|^2},$$
$$t_j'''(\varphi(z_j)) = \frac{2 \overline{\varphi(z_j)}^2}{(1-|\varphi(z_j)|^2)^2} (1+ (\log \frac{1}{1-|\varphi(z_j)|^2})^{-1}).$$
Consequently,
\begin{align*}
\| D_{\varphi, u}^1 & \|_{e, \mathcal{Z} \rightarrow \mathcal{Z}_{\beta}}  =  \inf_{K} \| D_{\varphi, u}^1 - K\|_{\mathcal{Z} \rightarrow \mathcal{Z}_{\beta}} \\
\gtrsim & \limsup_{j \rightarrow \infty}  (1-|z_j|^2)^{\beta} |u''(z_j)| \log \frac{1}{1-|\varphi(z_j)|^2} \\
& - \limsup_{j \rightarrow \infty}  (1-|z_j|^2)^{\beta}  |2 u'(z_j) \varphi'(z_j) + u(z_j) \varphi''(z_j)| \frac{2 |\varphi(z_j)|}{1-|\varphi(z_j)|^2} \\
& - \limsup_{j \rightarrow \infty}  (1-|z_j|^2)^{\beta}|u(z_j) \varphi'^2 (z_j) | \frac{2 |\varphi(z_j)|^2}{(1-|\varphi(z_j)|^2)^2} (1+ (\log \frac{1}{1-|\varphi(z_j)|^2})^{-1}) \\
= & \limsup_{|\varphi(z)| \rightarrow 1}  (1-|z|^2)^{\beta} |u''(z)| \log \frac{1}{1-|\varphi(z)|^2} \\
& - \limsup_{|\varphi(z)| \rightarrow 1}   (1-|z|^2)^{\beta}  |2 u'(z) \varphi'(z) + u(z) \varphi''(z)| \frac{2}{1-|\varphi(z)|^2} \\
& - \limsup_{|\varphi(z)| \rightarrow 1}   (1-|z|^2)^{\beta}|u(z) \varphi'^2 (z) | \frac{2}{(1-|\varphi(z)|^2)^2} (1+ (\log \frac{1}{1-|\varphi(z)|^2})^{-1}) \\
\gtrsim &  \limsup_{|\varphi(z)| \rightarrow 1}  (1-|z|^2)^{\beta} |u''(z)| \log \frac{1}{1-|\varphi(z)|^2} - \| D_{\varphi, u}^1\|_{e, \mathcal{Z} \rightarrow \mathcal{Z}_{\beta}} -\| D_{\varphi, u}^1\|_{e, \mathcal{Z} \rightarrow \mathcal{Z}_{\beta}},
\end{align*}
which implies that
$$ \| D_{\varphi, u}^1 \|_{e, \mathcal{Z} \rightarrow \mathcal{Z}_{\beta}} \gtrsim \limsup_{|\varphi(z)| \rightarrow 1}  (1-|z|^2)^{\beta} |u''(z)| \log \frac{1}{1-|\varphi(z)|^2}.$$
The proof of upper estimate is similar to the proof of Theorem \ref{g38} using the operators $K_r$ and Lemma \ref{l31}.
\end{proof}
%**************************************************************************************************
\begin{cor}
Let $0<\beta <\infty$ and  $D_{ \varphi, u}^1: \mathcal{Z} \rightarrow \mathcal{Z}_{\beta}$ be a bounded operator. Then, $D_{ \varphi, u}^1: \mathcal{Z} \rightarrow \mathcal{Z}_{\beta}$ is compact if and only if
\begin{itemize}
\item[$(i)$] $\limsup_{|\varphi(z)| \rightarrow 1} (1-|z|^2)^{\beta} |u''(z)| \log \frac{2} {(1-|\varphi(z)|^2)^{\alpha}}=0$,
\item[$(ii)$] $\limsup_{|\varphi(z)| \rightarrow 1} \frac{(1-|z|^2)^{\beta}}{1-|\varphi(z)|^2} |2u'(z) \varphi'(z) + u(z) \varphi''(z)|=0$,
\item[$(iii)$] $\limsup_{|\varphi(z)| \rightarrow 1} \frac{(1-|z|^2)^{\beta}}{(1-|\varphi(z)|^2)^2} |u(z) \varphi'^2(z)| =0$.
\end{itemize}
\end{cor}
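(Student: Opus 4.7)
The plan is to obtain this corollary as an immediate consequence of the essential norm estimate established in Theorem \ref{g40}, using the standard fact that a bounded linear operator is compact if and only if its essential norm vanishes.

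More precisely, I would first recall from the definitions given in the introduction that for any $T\in \mathcal{B}(X,Y)$ we have $T\in \mathcal{K}(X,Y)$ if and only if $\|T\|_{e,X\to Y}=0$. Applied to the bounded operator $D_{\varphi,u}^1:\mathcal{Z}\to \mathcal{Z}_\beta$, this reduces the statement of the corollary to showing that $\|D_{\varphi,u}^1\|_{e,\mathcal{Z}\to\mathcal{Z}_\beta}=0$ if and only if the three conditions $(i)$, $(ii)$, $(iii)$ simultaneously hold.

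The second step is to invoke Theorem \ref{g40}, which gives
$$\|D_{\varphi,u}^1\|_{e,\mathcal{Z}\to\mathcal{Z}_\beta} \approx \max\{Q_1,Q_2,Q_3\},$$
where $Q_1$, $Q_2$, $Q_3$ are precisely the three limsup quantities appearing on the left-hand sides of $(i)$, $(ii)$, $(iii)$. Since each $Q_i\geq 0$, we have $\max\{Q_1,Q_2,Q_3\}=0$ if and only if $Q_1=Q_2=Q_3=0$. Combined with the two-sided estimate $\|D_{\varphi,u}^1\|_{e,\mathcal{Z}\to\mathcal{Z}_\beta} \approx \max\{Q_1,Q_2,Q_3\}$, this yields the equivalence: the essential norm is zero if and only if all of $(i)$, $(ii)$, $(iii)$ hold. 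Threading this through the compactness criterion gives both directions of the corollary.

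There is essentially no obstacle here, since the heavy lifting has already been done in Theorem \ref{g40}. The only point requiring a moment of care is verifying that the three limsup quantities listed in the corollary statement match, term by term, the three limsups appearing inside the $\max$ in Theorem \ref{g40} (in particular, the logarithmic term in $(i)$ and the denominators $1-|\varphi(z)|^2$ and $(1-|\varphi(z)|^2)^2$ in $(ii)$ and $(iii)$). Once this identification is recorded, the proof reduces to a single line referencing Theorem \ref{g40} together with the definition of the essential norm.
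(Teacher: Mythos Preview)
Your proposal is correct and matches the paper's approach: the corollary is stated without proof in the paper, as it is an immediate consequence of Theorem \ref{g40} together with the fact that a bounded operator is compact if and only if its essential norm vanishes. Your identification of the three limsup quantities with those in Theorem \ref{g40} is exactly the intended reading.
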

%**************************************************************************************************
\begin{remark}\label{rem1}
Montes-Rodr$\acute{\text{i}}$guez in \cite[Theorem 2.1]{mon}, and also Hyv$\ddot{\text{a}}$rinen et al. in \cite[Theorem 2.4]{lin1},
proved that if $\nu$ and $\omega$ are radial and non-increasing weights tending to zero at the boundary of $\mathbb{D}$, then
\begin{itemize}
\item[$(i)$] the weighted composition operator $u C_{\varphi}$ maps $H_{\nu}^{\infty}$ into $H_{\omega}^{\infty}$
if and only if
$$\sup_{n \geq 0} \frac{\|u \varphi^n\|_{\omega}}{\|z^n\|_{\nu}} \asymp \sup_{z \in \mathbb{D}} \frac{\omega(z)}{\widetilde{\nu}(\varphi(z))} |u(z)| < \infty, $$
with norm comparable to the above supremum.
\item[$(ii)$] $\|u C_{\varphi}\|_{e, H_{\nu}^{\infty} \rightarrow H_{\omega}^{\infty}} = \limsup_{n \rightarrow \infty} \frac{\|u \varphi^n\|_{\omega}}{\|z^n\|_{\nu}}
= \limsup_{|\varphi(z)| \rightarrow 1} \frac{\omega(z)}{\widetilde{\nu}(\varphi(z))} |u(z)|.$
\end{itemize}
Also, by \cite[Lemma 2.1]{lin3}, we know that for each $0 < \alpha < \infty$
\begin{itemize}
\item[$(iii)$] $\limsup_{n \rightarrow \infty} (n+1)^{\alpha} \|z^n\|_{{\nu}_{\alpha}} = (\frac{2 \alpha }{e})^{\alpha}$,
\item[$(iv)$] $\limsup_{n \rightarrow \infty} (\log n) \|z^n\|_{{\nu}_{\log}} =1$.
\end{itemize}
By applying these facts, our results in this paper containing terms of the type $\frac{\omega(z)}{\widetilde{\nu}(\varphi(z))} |u(z)|$ can be restated in terms of $u$ and $\varphi^n$. See, for example, \cite{SH} and references therein for these types of results.
\end{remark}
%**************************************************************************************************
%**************************************************************************************************
%**************************************************************************************************
%\subsection*{Acknowledgment}
%**************************************************************************************************
%**************************************************************************************************
%**************************************************************************************************

\end{document}